\newtheorem{theorem}{Theorem}[section]
\newtheorem{lemma}[theorem]{Lemma}
\newtheorem{proposition}[theorem]{Proposition}
\theoremstyle{definition}
\theoremstyle{remark}
\newtheorem{remark}[theorem]{Remark}
\newcommand{\N}{\mathbb{N}}
\newcommand{\R}{\mathbb{R}}
\newcommand{\C}{\mathbb{C}}
\newcommand{\cP}{\mathcal{P}}
\newcommand{\cS}{\mathcal{S}}
\newcommand{\cT}{\mathcal{T}}
\newcommand{\on}{\operatorname}
\renewcommand{\mod}[1]{\,(\on{mod}#1)}
\newcommand{\of}[1]{\left(#1\right)}
\newcommand{\set}[1]{\left\{#1\right\}}
\newcommand{\abs}[1]{\left|#1\right|}
\newcommand{\BEu}[1]{\underset{#1}{\mathlarger{\mathlarger{\mathbb{E}}}^{~}}\,}
\author{Huixi Li}
\address{School of Mathematical Sciences and LPMC, Nankai University, Tianjin 300071, China}
\email{lihuixi@nankai.edu.cn}
\author{Biao Wang}
\address{School of Mathematics and Statistics, Yunnan University, Kunming, Yunnan 650091, China}
\email{bwang@ynu.edu.cn}
\author{Chunlin Wang}
\address{School of Mathematical Sciences, Sichuan Normal University, Chengdu 610064, China}
\email{c-l.wang@outlook.com}
\author{Shaoyun Yi}
\address{School of Mathematical Sciences, Xiamen University, Xiamen, Fujian 361005, China}
\email{yishaoyun926@xmu.edu.cn}
\date{\today}
\title[Some ergodic theorems]{Some ergodic theorems over squarefree numbers and squarefull numbers}
\subjclass[2020]{Primary 11K36; Secondary 11N37, 37A44}
\keywords{Prime number theorem, Uniquely ergodic, Squarefree numbers, Squarefull numbers, Erd\H{o}s-Kac Theorem}
\begin{document}
	
\begin{abstract}
In 2022, Bergelson and Richter gave a new dynamical generalization of the prime number theorem by establishing an ergodic theorem along the number of prime factors of integers. They also showed that this generalization holds as well if the integers are restricted to be squarefree. In this paper, we present the concept of invariant averages  under multiplications for arithmetic functions. Utilizing the properties of these invariant averages, we derive several ergodic theorems over squarefree numbers and squarefull numbers. These theorems have significant connections to the Erd\H{o}s-Kac Theorem, the Bergelson-Richter Theorem, and the Loyd Theorem.
\end{abstract}

\maketitle

\section{Introduction and statement of results}

Let $\Omega(n)$ be the number of prime divisors of $n$ counted with multiplicity for any $n\in\N$. Let $\lambda(n)=(-1)^{\Omega(n)}$ be the Liouville function. Then the prime number theorem is equivalent to the assertion that
\begin{equation}\label{eqn_pnt_lambda}
	\lim_{N \to \infty} \frac1N\sum_{n=1}^N \lambda(n)=0,
\end{equation}
see \cite{Landau1953, Mangoldt1897}. Recently, Bergelson and Richter \cite{BergelsonRichter2022} established new dynamical generalizations of the prime number theorem by placing \eqref{eqn_pnt_lambda} in a dynamical framework. Given any uniquely ergodic topological dynamical system $(X, \mu, T)$, they showed that 
\begin{equation}\label{eqn_BR2022thmA}
	\lim_{N\to\infty}\frac1N\sum_{n=1}^N f(T^{\Omega(n)}x)=\int_X f \,d\mu
\end{equation}
holds for any $f\in C(X)$ and $x\in X$. Then the equivalent form \eqref{eqn_pnt_lambda} of the prime number theorem is recovered from  \eqref{eqn_BR2022thmA} by taking $(X,T)$ to be $x\mapsto x +1 \mod 2$ on $X=\{0,1\}$.

Let $\mu(n)$ be the M\"obius function, which is defined to be $1$ if $n = 1$,  $(-1)^k$ if $n$ is the product of $k$ distinct primes, and zero otherwise. Then $\mu^2(n)$ is the indicator function of squarefree numbers, and $\mu(n)=\mu^2(n)\lambda(n)$. It is well known that the natural density of squarefree numbers in $\N$ is equal to $\zeta(2)^{-1}=6/\pi^2$, where $\zeta(s)$ is the Riemann zeta function. If one restricts the numbers $n$ to be squarefree in \eqref{eqn_pnt_lambda} and \eqref{eqn_BR2022thmA}, respectively, then  we obtain another well-known
equivalent form of the prime number theorem
\begin{equation}\label{eqn_pnt_mu}
	\lim_{N \to \infty} \frac1N\sum_{n=1}^N \mu(n)=0,
\end{equation}
and by \cite[Corollary~1.8]{BergelsonRichter2022} we have
\begin{equation}\label{eqn_BR2022cor1.8}
	\lim_{N \to \infty} \frac1N\sum_{\substack{1\le n \le N\\ n\,\text{squarefree}}} f(T^{\Omega(n)}x)=\frac6{\pi^2}\int_X f \,d\mu
\end{equation}
for any $f\in C(X)$ and $x\in X$. Therefore, \eqref{eqn_BR2022cor1.8} is another dynamical generalization of the prime number theorem. 

\subsection{Some ergodic theorems over squarefree numbers}

After Bergelson and Richter's work \cite{BergelsonRichter2022}, there are several generalizations discovered by Loyd \cite{Loyd2022}, Wang \cite{Wang2022}, and Wang et. al. \cite{WWYY2023} on the ergodic theorem \eqref{eqn_BR2022thmA}. A natural question is whether these ergodic theorems hold or not, if the averages are restricted to run over squarefree numbers. In this paper, we will answer this question affirmatively by considering general arithmetic functions of invariant averages. 

Let $a: \N\to\C$ be a bounded arithmetic function. We say that $a(n)$ is of \textit{invariant average under (left) multiplications},  if the average of $a(n)$
\begin{equation}\label{eqn_inv_ave1}
    \lim_{N\to\infty}\frac1N\sum_{n=1}^N a(n)
\end{equation}
exists and satisfies 
\begin{equation}\label{eqn_inv_ave2}
    \lim_{N\to\infty}\frac1N\sum_{n=1}^N a(mn)= \lim_{N\to\infty}\frac1N\sum_{n=1}^N a(n)
\end{equation}
for all $m\in \N$. For example, the orbits $a(n)=f(T^{\Omega(n)}x)$ in \eqref{eqn_BR2022thmA} are of invariant average under multiplications. The following theorem is our main result on such invariant averages.

\begin{theorem}
\label{mainthm_sqfree}
Let $a: \N\to\C$ be a bounded arithmetic function of average $A$. Let $\cS$ be a finite set of primes.  If the average of $a(n)$ is invariant under multiplications, then
\[
	\lim_{N\to\infty}\frac1N\sum_{\substack{1\leq n \leq N\\ n\,\text{squarefree}\\ p\,\nmid\,n, \forall p \in \cS}} a(n)= \frac{\alpha(\cS)}{\zeta(2)} A,
\]where $\alpha(\cS)=\prod_{p\in \cS}\frac{p}{p+1}$. In particular, if $\cS=\emptyset$, then we have
\[
	\lim_{N\to\infty}\frac1N\sum_{\substack{1\leq n \leq N\\ n\,\text{squarefree}}} a(n)= \frac{6}{\pi^2}A.
\]
\end{theorem}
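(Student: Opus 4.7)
The plan is to combine the M\"obius expansion $\mu^2(n)=\sum_{d^2\mid n}\mu(d)$ with the invariance hypothesis, after a second M\"obius inversion disposes of the coprimality condition with $\cS$. The first observation is that \eqref{eqn_inv_ave2} immediately gives the divisibility average
\[
\lim_{N\to\infty}\frac{1}{N}\sum_{\substack{n\le N\\ k\mid n}}a(n)=\frac{A}{k},\qquad k\in\N,
\]
by substituting $n=km$ (so the inner sum equals $\sum_{m\le\lfloor N/k\rfloor}a(km)$) and combining $\lfloor N/k\rfloor/N\to 1/k$ with \eqref{eqn_inv_ave2} applied to the factor $m=k$.

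Setting $P=\prod_{p\in\cS}p$, I would use $\mathbf{1}_{\gcd(n,P)=1}=\sum_{e\mid P,\,e\mid n}\mu(e)$ together with $\mu^2(n)=\sum_{d^2\mid n}\mu(d)$ to rewrite the restricted indicator as
\[
\sum_{e\mid P}\mu(e)\sum_{d\ge 1}\mu(d)\,\mathbf{1}_{\on{lcm}(e,d^2)\mid n}.
\]
Truncating the $d$-sum at $d\le D$, the tail contribution to the normalized average is bounded uniformly in $N$ by
\[
\|a\|_\infty\, 2^{|\cS|}\sum_{d>D}\frac{\#\{n\le N:d^2\mid n\}}{N}\ll_{a,\cS}\sum_{d>D}d^{-2}\ll\frac{1}{D},
\]
so applying the divisibility average termwise (with $k=\on{lcm}(e,d^2)$) to the finitely many remaining terms, and then sending $D\to\infty$, yields
\[
\lim_{N\to\infty}\frac{1}{N}\sum_{\substack{1\leq n \leq N\\ n\,\text{squarefree}\\ p\,\nmid\,n,\,\forall p\in\cS}}a(n)=A\sum_{e\mid P}\mu(e)\sum_{d\ge 1}\frac{\mu(d)}{\on{lcm}(e,d^2)}.
\]

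Since any nonzero $e,d$ appearing is squarefree, comparing prime valuations gives $\gcd(e,d^2)=\gcd(e,d)$, so $\on{lcm}(e,d^2)=ed^2/\gcd(e,d)$, and the inner $d$-series factors absolutely into the Euler product
\[
\sum_{d\ge 1}\frac{\mu(d)\gcd(d,e)}{d^2}=\prod_{p\mid e}(1-p^{-1})\prod_{p\nmid e}(1-p^{-2}).
\]
Summing over $e\mid P$ via the identity $\sum_{T\subseteq\cS}\prod_{p\in T}X_p\prod_{p\in\cS\setminus T}Y_p=\prod_{p\in\cS}(X_p+Y_p)$ with the choices $X_p=-\tfrac{1}{p}(1-\tfrac{1}{p})$ and $Y_p=1-\tfrac{1}{p^2}$, each local factor collapses to $X_p+Y_p=1-1/p$. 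Combining this with $\prod_{p\notin\cS}(1-p^{-2})=\zeta(2)^{-1}\prod_{p\in\cS}(1-p^{-2})^{-1}$ telescopes the product to $\zeta(2)^{-1}\prod_{p\in\cS}\tfrac{p}{p+1}=\alpha(\cS)/\zeta(2)$, giving the stated constant. The only genuinely nontrivial step will be the uniform truncation bound in the $d$-variable, which legitimizes interchanging the infinite $d$-summation with $\lim_{N\to\infty}$; once that is in place, the rest is a routine Euler-product calculation.
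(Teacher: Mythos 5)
Your proposal is correct and follows essentially the same route as the paper: M\"obius-expand the squarefree and $\cS$-coprimality indicators, truncate the $d$-sum using the tail bound $\sum_{d>D}d^{-2}\ll 1/D$ uniformly in $N$, apply the invariance hypothesis termwise to the finitely many surviving divisibility conditions, and then evaluate the resulting constant as an Euler product. The only difference is organizational: the paper keeps the coprimality weight $w_\cS$ intact via the Cellarosi--Sinai identity $\mu^2(n)w_\cS(n)=\sum_{d^2\mid n}\mu(d)w_\cS(d)w_\cS(n/d)$ and first proves $\lim_{N\to\infty}\frac1N\sum_{n\le N}w_\cS(n)a(n)=\prod_{p\in\cS}\frac{p-1}{p}\cdot A$ as a separate step, whereas you merge the two M\"obius inversions into a single expansion over $\operatorname{lcm}(e,d^2)$ and reduce everything to the divisibility average $\lim_{N\to\infty}\frac1N\sum_{k\mid n,\,n\le N}a(n)=A/k$; your closing Euler-product computation of $\alpha(\cS)/\zeta(2)$ checks out.
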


Clearly, \eqref{eqn_BR2022cor1.8} follows by Theorem~\ref{mainthm_sqfree} if we take $a(n)=f(T^{\Omega(n)}x)$ as in \eqref{eqn_BR2022thmA}. Moreover, as applications of Theorem~\ref{mainthm_sqfree}, the following analogies of \eqref{eqn_BR2022cor1.8} hold as well.

\begin{theorem}\label{mainthm_sqfree_applications}
Let $\cS$ be a finite set of primes, and let $\alpha(\cS)=\prod_{p\in \cS}\frac{p}{p+1}$.

\begin{enumerate}
	\item Let $(X, \mu, T)$ be a uniquely ergodic topological dynamical system. For $k\ge0$, let $\varphi_k(n)$ be the $k$-fold iterate of Euler's totient function $\varphi$, i.e., $\varphi_0(n)=n$ and $\varphi_k(n)=\varphi_{k-1}(\varphi(n))$ for $k\ge1$. Let 
 \begin{equation}\label{akbk}
     a_k=\frac{1}{(k+1)!} \quad \text{and} \quad b_k=\frac{k!}{\sqrt{2k+1}}.
     \end{equation}
 Let $\cT$ be a set of primes of natural density $\delta(\cT)$. Denote by $p_{\max}(n)$ the largest prime factor of $n$ with $p_{\max}(1)=1$. Let $k\ge0$ and $F \in C_c(\R)$, then  we have
\begin{equation}\label{eqn_maincor_bkw}
    \begin{split}
      &\lim_{N\to\infty}\frac1N\sum_{\substack{1\leq n \leq N\\ n\,\text{squarefree}\\ p\,\nmid\,n, \forall p \in \cS\\p_{\max}(n)\in \cT}} F\Big(\frac{\Omega(\varphi_k(n)) - a_k(\log \log N)^{k+1} }{b_k(\log \log N)^{k+1/2}} \Big)  f(T^{\Omega(n) }x)\\
	&\qquad\qquad\qquad\qquad = \frac{\alpha(\cS)\delta(\cT)}{\zeta(2)}\Big(\frac{1}{\sqrt{2\pi}} \int_{-\infty}^{\infty} F(t) e^{-t^2/2} \, dt\Big)\Big(\int_X f \, d\mu \Big)   
    \end{split}
\end{equation}
for any $f\in C(X)$ and $x\in X$.	
	
	\item Let $(Y,\nu, S)$ be a multiplicative, finitely generated and strongly uniquely ergodic topological dynamical system,  then we have
\begin{equation}\label{eqn_maincor_BRthmB}
	\lim_{N\to\infty}\frac1N\sum_{\substack{1\leq n \leq N\\ n\,\text{squarefree}\\ p\,\nmid\,n, \forall p \in \cS}} g(S_ny)	= \frac{\alpha(\cS)}{\zeta(2)}  \int_Y g\,d\nu
\end{equation}
for any $g\in C(Y)$ and $y\in Y$. 

In particular, in a uniquely ergodic system $(X, \mu, T)$, for any $m\in \N, 0\leq r\leq m-1$, $f\in C(X)$ and $x\in X$, we have
\begin{equation}\label{eqn_maincor_BRcor1.16}
	\lim_{N \to \infty} \frac1N\sum_{\substack{1\le n \le N\\ n\,\text{squarefree}}} f(T^{\Omega(mn+r)}x)=\frac6{\pi^2}\int_X f \,d\mu.
\end{equation}

\end{enumerate}	
\end{theorem}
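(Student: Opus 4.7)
Part~(2) is a direct application of Theorem~\ref{mainthm_sqfree} with $a(n)=g(S_n y)$ and $A=\int_Y g\,d\nu$. Existence of the average is the Bergelson-Richter theorem for strongly uniquely ergodic multiplicative systems. For invariance under multiplications, multiplicativity of $(Y,S)$ gives $S_{mn}y=S_n(S_m y)$; re-applying the same theorem at the base point $S_m y$ yields that the $m$-shifted average equals $\int_Y g\,d\nu$, independently of $m\in\N$. The special case \eqref{eqn_maincor_BRcor1.16} is analogous: set $a(n)=f(T^{\Omega(mn+r)}x)$, whose average is $\int_X f\,d\mu$ by Bergelson-Richter's Corollary~1.16 applied to the pair $(m,r)$, and whose $k$-shifted average is the same by the same corollary applied to $(mk,r)$. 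Theorem~\ref{mainthm_sqfree} with $\cS=\emptyset$ then delivers \eqref{eqn_maincor_BRcor1.16}.

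For Part~(1), the summand depends on $N$ through $\log\log N$, so Theorem~\ref{mainthm_sqfree} does not apply verbatim. My plan proceeds in two steps. First, I establish the unrestricted joint convergence
\begin{equation*}
\frac{1}{N}\sum_{n=1}^{N} F\!\left(\frac{\Omega(\varphi_k(n))-a_k(\log\log N)^{k+1}}{b_k(\log\log N)^{k+1/2}}\right) \mathbf{1}_{p_{\max}(n)\in\cT}\, f(T^{\Omega(n)}x) \longrightarrow G\,\delta(\cT) \int_X f\,d\mu,
\end{equation*}
where $G=\frac{1}{\sqrt{2\pi}}\int_{-\infty}^{\infty} F(t) e^{-t^2/2}\,dt$. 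This combines Loyd's Erd\H{o}s-Kac theorem for $\Omega(\varphi_k(n))$, the Bergelson-Richter theorem for $f(T^{\Omega(n)}x)$, and the natural density of $\{n\colon p_{\max}(n)\in\cT\}$; the decoupling of the three factors rests on the asymptotic independence of $\Omega(\varphi_k(n))$, $\Omega(n)$, and $p_{\max}(n)$, a joint phenomenon already exploited in \cite{WWYY2023}.

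Second, to pass to squarefree $n$ coprime to $\cS$ I adapt the M\"obius-inversion argument underlying Theorem~\ref{mainthm_sqfree} to the $N$-dependent integrand $a_N(n)=F(\cdots)\mathbf{1}_{p_{\max}(n)\in\cT}f(T^{\Omega(n)}x)$. Using $\mu^2(n)\prod_{p\in\cS}\mathbf{1}_{p\nmid n}=\sum_{d^2\mid n}\mu(d)\sum_{e\mid\gcd(n,\prod_{p\in\cS}p)}\mu(e)$ and setting $c=d^2 e$, each inner sum reduces to $(N/c)\cdot A_N(c)$ with $A_N(c)=\frac{c}{N}\sum_{m\le N/c}a_N(cm)$. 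Multiplying by $c$ shifts $\Omega(n)$ by the bounded amount $\Omega(c)$, absorbed by unique ergodicity of $(X,T)$, and perturbs the argument of $F$ by $O(\log c/\log N)$, negligible for $c$ below a slowly growing truncation $D(N)$; the tail contribution $c>D(N)$ is $O\!\bigl(N\sum_{d>\sqrt{D(N)}}1/d^2\bigr)=o(N)$. Assembling the surviving Euler product yields the factor $\alpha(\cS)/\zeta(2)$. The principal obstacle is step one, namely the joint asymptotic independence of the three factors; once that is in place, step two proceeds along the lines of the proof of Theorem~\ref{mainthm_sqfree}.
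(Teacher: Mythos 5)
Your proposal is correct and, on the two load-bearing points, coincides with the paper's proof: part~(2) is in both cases an immediate application of Theorem~\ref{mainthm_sqfree} with the average and its invariance supplied by \cite[Theorem~B, Corollary~1.16]{BergelsonRichter2022}, and for part~(1) both you and the paper take as input the \emph{unrestricted} joint limit for $F(\cdots)\,\mathbf{1}_{p_{\max}(n)\in\cT}\,f(T^{\Omega(n)}x)$, deferring its proof to the techniques of \cite{WWYY2023} and \cite{BergelsonRichter2022}; so the step you flag as the principal obstacle is exactly the step the paper also does not prove in detail. Where you genuinely diverge is in how the squarefree/coprimality restriction is then imposed. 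The paper verifies that $a(n)=\mathbf{1}_{p_{\max}(n)\in\cT}F(\cdots)f(T^{\Omega(n)}x)$ has an invariant average under multiplications (via the lemma $\Omega(\varphi_k(mn))=\Omega(\varphi_k(n))+O_{k,m}(1)$, \cite[Lemma~4.5]{Wang2022} for the $p_{\max}$ indicator, uniform continuity of $F$, and the joint limit at the shifted base point $T^{\Omega(m)}x$) and then invokes Theorem~\ref{mainthm_sqfree} as a black box, even though the summand depends on $N$, which that theorem as stated does not cover; you instead re-run the M\"obius-inversion-with-truncation argument of Proposition~\ref{prop_sqfree} directly on the $N$-dependent integrand. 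This is the same computation, but your inlined version is the more careful route, since the error terms in Proposition~\ref{prop_sqfree} are uniform in the summand and the truncation can be sent to infinity after $N$. Three small repairs: the perturbation of the argument of $F$ after multiplying by $c$ is $O_{k,c}(1)/(\log\log N)^{k+1/2}$, not $O(\log c/\log N)$ (harmless, but that is what the iterated-totient lemma actually gives); when you combine $d^2\mid n$ with $e\mid n$ the relevant modulus is $\operatorname{lcm}(d^2,e)$ rather than $d^2e$ when $\gcd(d,e)>1$ (or simply use the single identity $\mu^2(n)w_\cS(n)=\sum_{d^2\mid n}\mu(d)w_\cS(d)w_\cS(n/d)$ as the paper does); and in your step two you must also replace $\mathbf{1}_{p_{\max}(cm)\in\cT}$ by $\mathbf{1}_{p_{\max}(m)\in\cT}$, which costs only the density of $c$-smooth numbers (this is \cite[Lemma~4.5]{Wang2022}) but should be stated.
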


\begin{remark}
One may consider $r$-free numbers and the case $\cS$ is infinite for $r\geq2$ in Theorem~\ref{mainthm_sqfree} and Theorem~\ref{mainthm_sqfree_applications}; see \cite{EvelynLinfoot1931, CellarosiVinogradov2013, Brown2021, Brown2023}. And one may also consider the analogues of these results over finite fields.  
\end{remark}

Let $\cP$ denote the set of all primes. As demonstrated in Table \ref{table_results_over_squarefree_numbers}, by applying Theorem \ref{mainthm_sqfree_applications} with specific choices of $k$, $\cS$, $\cT$, $F$, and $f$, we can derive several known ergodic theorems concerning squarefree numbers. For instance, as indicated in row 2, setting $k=0$, $\cS=\emptyset$, and $\cT=\cP$ in equation \eqref{eqn_maincor_bkw} yields Loyd's result \cite[Theorem 1.3]{Loyd2022} for squarefree numbers.

\begin{table}[h]
\begin{equation*}
\begin{array}{c|c|c|c|c|c|l}
\text{Equation}&  k  & \cS & \cT & F & f & \text{Theorems}\\\hline
\eqref{eqn_maincor_bkw} & 0  & \emptyset &  \cP & & 1 & \text{Erd\H{o}s-Kac Theorem \cite{ErdosKac1940}} \\\hline
\eqref{eqn_maincor_bkw} & 0  & \emptyset &  \cP & & & \text{Loyd \cite[Theorem~1.3]{Loyd2022}} \\\hline
\eqref{eqn_maincor_bkw} & \text{\footnotesize NA} & \emptyset & & 1 & & \text{Wang \cite[Eq.~(7)]{Wang2022}} \\\hline
\eqref{eqn_maincor_bkw} & 1 & \emptyset & & & & \text{Wang, Wei, Yan, and Yi \cite[Theorem~1.4]{WWYY2023}}  \\\hline\hline
\eqref{eqn_maincor_BRthmB} & \text{\footnotesize NA} & \emptyset & \text{\footnotesize NA} & \text{\footnotesize NA} & & \text{Bergelson and Richter \cite[Theorem~B]{BergelsonRichter2022}}\\\hline
\eqref{eqn_maincor_BRcor1.16} & \text{\footnotesize NA} & \emptyset & \text{\footnotesize NA} & \text{\footnotesize NA} & & \text{Bergelson and Richter \cite[Corollary~1.16]{BergelsonRichter2022}}\\\hline
\end{array}    
\end{equation*}    
\caption{Associated results \textit{over squarefree numbers}}\label{table_results_over_squarefree_numbers}
\end{table}

\subsection{Some ergodic theorems over squarefull numbers}

Next, we consider the restriction of Bergelson-Richter's theorem \eqref{eqn_BR2022thmA} over squarefull numbers. In general, we study the $k$-full numbers for any integer $k\ge2$. A natural number $n$ is said to be \textit{$k$-full} if $p^k$ is a divisor of $n$ for every prime factor $p$ of $n$. When $k=2$, $2$-full numbers are called  \textit{squarefull numbers} or \textit{powerful numbers}, see \cite{Golomb1970}. The distribution and properties of squarefull numbers and $k$-full numbers have been intensely studied in the literature, see \cite{ErdosSzekeres1934, BatemanGrosswald1958, Ivic1978, Trifonov2002, Blomer2005, Chan2015, Chan2023, BBC2024} and so on. Similar to the case on squarefree numbers, we study the summation of bounded arithmetic functions over $k$-full numbers first. For a bounded function $a: \N\to\C$, we say $a(n)$ is of \textit{$k$-th power invariant average under multiplications}, if the following mean value
\begin{equation}\label{eqn_k_invariant_cond_1}
	\lim_{N\to\infty}\frac1N\sum_{n=1}^N a(n^k)
\end{equation}
along $k$-th powers exists and 
\begin{equation}\label{eqn_k_invariant_cond_2}
    \lim_{N\to\infty}\frac1N\sum_{n=1}^N a(n^km)= \lim_{N\to\infty}\frac1N\sum_{n=1}^N a(n^k)
\end{equation}
holds for all $m\in \N$. For example, let $S$ be a set of primes of natural density $\delta(S)$ within all primes, then by \cite[Lemma 4.5]{Wang2022} and \cite[Theorem 3.1]{KuralMcDonaldSah2020} the indicator function $1_{p_{\max}(n)\in S}$ is of $k$-th power invariant average $\delta(S)$ under multiplications for all $k\ge2$. If we take $k=1$ in \eqref{eqn_k_invariant_cond_1} and \eqref{eqn_k_invariant_cond_2}, then it is as the same definition of invariant average under multiplications as in \eqref{eqn_inv_ave1} and \eqref{eqn_inv_ave2}.  Let $(X, \mu, T)$ be a uniquely ergodic topological dynamical system. If $(X,\mu, T^n)$ is also uniquely ergodic for every $n\in \N$, then we say $(X, \mu, T)$ is a \textit{totally uniquely ergodic system}.  We will show in the proof of Theorem~\ref{mainthm_kfull_applications} that if $(X, \mu, T)$ is a totally uniquely ergodic system, then the sequence $\set{f(T^{\Omega(n)}x): n\in\N}$ in \eqref{eqn_BR2022thmA} is of $k$-th power invariant average under multiplications for any $k\ge2$.  

If $B$ is a finite nonempty set, we define $\BEu{x\in B}f(x):=\frac1{|B|}\sum_{x\in B}f(x)$ for any function $f:B\to\C$ on $B$. For a real number $s\geq 1$,  let $[s]\colonequals\N\cap[1,s]$ be the set of natural numbers between $1$ and $s$. Then $[N]=\set{1,\dots,N}$ for any $N\in \N$.  Using this notation and the fact that density of squarefree numbers in $\N$ is $6/\pi^2$, \eqref{eqn_BR2022cor1.8} can be rewritten as
\[
	\lim_{N\to\infty}\BEu{\substack{n \in [N]\\ n \, \text{squarefree}}} f(T^{\Omega(n)}x)=\int_X f \,d\mu.
\]
The following theorem is our main result on  $k$-th power invariant averages.

\begin{theorem}
\label{mainthm_kfull}
Let $k\geq2$. If $a: \N\to\C$ is a bounded arithmetic function of $k$-th power invariant average under multiplications, then we have
\[
\lim_{N\to\infty} \BEu{\substack{ n \in [N]\\ n \,\text{is} \, k\text{-full}}} a(n)= \lim_{N\to\infty}\BEu{n\in [N] } a(n^k).
\]
\end{theorem}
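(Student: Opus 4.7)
The plan is to parameterize $k$-full integers in a way that makes the invariance hypothesis \eqref{eqn_k_invariant_cond_2} applicable fiberwise. Every $k$-full $n$ factors uniquely as $n = m^k Q$ with $m \in \N$ and $Q$ in the set
\[
    \mathcal{Q}_k := \{Q \in \N : v_p(Q) \in \{0\} \cup \{k+1, \ldots, 2k-1\} \text{ for every prime } p\}.
\]
This is derived prime by prime: write $v_p(n) = k q_p + r_p$ with $0 \leq r_p \leq k-1$; set $v_p(Q) = 0$, $v_p(m) = q_p$ when $r_p = 0$, and $v_p(Q) = k + r_p$, $v_p(m) = q_p - 1$ when $r_p \geq 1$. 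The $k$-fullness $v_p(n) \geq k$ ensures $q_p \geq 1$ in the second case, so $v_p(m) \geq 0$. For $k = 2$ this recovers the classical $n = m^2 b^3$ with $b$ squarefree. Reorganizing,
\[
    S(N) := \sum_{\substack{n \leq N \\ n \text{ is } k\text{-full}}} a(n) = \sum_{\substack{Q \in \mathcal{Q}_k \\ Q \leq N}} \sum_{m \leq (N/Q)^{1/k}} a(m^k Q).
\]

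Set $A := \lim_N \frac{1}{N} \sum_{n \leq N} a(n^k)$ and $M_Q := \lfloor (N/Q)^{1/k} \rfloor$. For each fixed $Q$, the invariance hypothesis \eqref{eqn_k_invariant_cond_2} applied to the sequence $m \mapsto a(m^k Q)$ gives $\sum_{m \leq M_Q} a(m^k Q) = M_Q(A + \epsilon_Q)$ with $\epsilon_Q \to 0$ as $N \to \infty$. The Dirichlet series
\[
    C_k := \sum_{Q \in \mathcal{Q}_k} Q^{-1/k} = \prod_p \Big( 1 + \sum_{j = k+1}^{2k-1} p^{-j/k} \Big)
\]
converges absolutely for $k \geq 2$ because its smallest exponent $(k+1)/k$ exceeds $1$; hence $\sum_{Q \leq N} M_Q = C_k N^{1/k} + o(N^{1/k})$ and
\[
    S(N) = A C_k N^{1/k} + \sum_{Q \leq N} M_Q \epsilon_Q + o(N^{1/k}).
\]

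The decisive step is to show that the error $\sum_{Q \leq N} M_Q \epsilon_Q$ is $o(N^{1/k})$. Since the invariance hypothesis gives only pointwise convergence $\epsilon_Q \to 0$, not uniform in $Q$, I would argue by dominated convergence: given $\eta > 0$, use the uniform bound $|\epsilon_Q| \leq 2 \|a\|_\infty$ and the convergence of $C_k$ to choose $L$ with $2 \|a\|_\infty \sum_{Q > L} Q^{-1/k} < \eta$. Then the tail $Q > L$ contributes at most $\eta N^{1/k}$, while the head $Q \leq L$ contributes $o(N^{1/k})$ since there are only finitely many such $Q$ and each $\epsilon_Q \to 0$. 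Applying the identical argument to the constant function $a \equiv 1$ (trivially of $k$-th power invariant average $1$) yields the companion asymptotic $|\{n \leq N : n \text{ is } k\text{-full}\}| = C_k N^{1/k} + o(N^{1/k})$, a Bateman--Grosswald type count. Taking the ratio produces $\BEu{\substack{n \in [N] \\ n \text{ is } k\text{-full}}} a(n) \to A$, as claimed. The main obstacle lies precisely in this passage from pointwise convergence $\epsilon_Q \to 0$ to control of the full weighted sum; it hinges on the absolute convergence of $\sum_Q Q^{-1/k}$, which is what makes the $k \geq 2$ hypothesis essential.
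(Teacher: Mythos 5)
Your proof is correct and follows essentially the same route as the paper: your set $\mathcal{Q}_k$ is exactly the set of numbers $n_1^{k+1}\cdots n_{k-1}^{2k-1}$ with the $n_i$ squarefree and pairwise coprime used in the paper's Proposition~\ref{mainprop}, and your truncation-at-$L$ plus fiberwise application of \eqref{eqn_k_invariant_cond_2} is the paper's ``fix $D_1,\dots,D_{k-1}$, let $N\to\infty$, then let the $D_i\to\infty$'' argument in single-parameter form. The only cosmetic difference is that you derive the count of $k$-full numbers self-containedly from the case $a\equiv 1$, whereas the paper cites the Erd\H{o}s--Szekeres asymptotic \eqref{eqn_ErdosSzekeres1934} to identify the constant $c_k$.
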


As an application of Theorem~\ref{mainthm_kfull}, we can establish analogues of Bergelson-Richter's theorem, Erd\H{o}s-Kac theorem  and Loyd's theorem for  $k$-full numbers. 

\begin{theorem}\label{mainthm_kfull_applications}

Let $(X, \mu, T)$ be a totally uniquely ergodic  system, then the following statements hold:

\begin{enumerate}
	\item For any $f\in C(X)$ and $x\in X$, we have
 \begin{equation}\label{eqn_mainthm_ergogic}
	 	\lim_{N\to\infty} \BEu{\substack{ n \in [N]\\ n \,\text{is} \, k\text{-full}}} f(T^{\Omega(n) }x) = \int_X f \, d\mu.
	 \end{equation}
	 
	 \item For any $F\in C_c(\R)$, we have 
	\begin{equation}\label{eqn_mainthm_EK}
	\lim_{N \to \infty}  \BEu{\substack{ n \in [N]\\ n \,\text{is} \, k\text{-full}}} F\Big( \frac{\Omega(n) - k\log \log N }{k\sqrt{\log \log N}} \Big)=\frac{1}{\sqrt{2\pi}} \int_{-\infty}^{\infty} F(t) e^{-t^2/2} \, dt.
\end{equation}

\item For any $f\in C(X)$, $x\in X$ and $F\in C_c(\R)$, we have
	\begin{equation}\label{eqn_mainthm_Loyd}
\lim_{N \to \infty}  \BEu{\substack{ n \in [N]\\ n \,\text{is} \, k\text{-full}}} F \Big( \frac{\Omega(n) - k\log \log N }{k\sqrt{\log \log N}} \Big) f(T^{\Omega(n) }x)
	= \Big(\frac{1}{\sqrt{2\pi}} \int_{-\infty}^{\infty} F(t) e^{-t^2/2} \, dt\Big)\Big( \int_X f \, d\mu \Big).
\end{equation}
	 \end{enumerate}
\end{theorem}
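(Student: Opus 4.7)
The overall plan is to derive all three parts from Theorem~\ref{mainthm_kfull} by choosing the arithmetic function $a(n)$ appropriately in each case and reducing the $k$-full sum to a sum over $n \in [N]$ of $a(n^k)$, which can then be evaluated by the Bergelson--Richter theorem, the classical Erd\H{o}s--Kac theorem, or Loyd's theorem applied to the system $(X,\mu,T^k)$. The total unique ergodicity hypothesis on $(X,\mu,T)$ guarantees that $(X,\mu,T^k)$ is itself uniquely ergodic, so all three classical inputs are available to us with respect to the iterate $T^k$.

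For Part (1), I would take $a(n) = f(T^{\Omega(n)}x)$, which is bounded since $f \in C(X)$ and $X$ is compact. Using $\Omega(n^k) = k\Omega(n)$, we rewrite $a(n^k) = f\bigl((T^k)^{\Omega(n)}x\bigr)$, so that \eqref{eqn_BR2022thmA} applied to $(X,\mu,T^k)$ gives $\lim_{N\to\infty}\frac{1}{N}\sum_{n=1}^N a(n^k) = \int_X f\,d\mu$, verifying \eqref{eqn_k_invariant_cond_1}. For the invariance requirement \eqref{eqn_k_invariant_cond_2}, write $a(n^k m) = f\bigl((T^k)^{\Omega(n)}(T^{\Omega(m)}x)\bigr)$ and reapply \eqref{eqn_BR2022thmA} to $(X,\mu,T^k)$ with initial point $T^{\Omega(m)}x$; this produces the same limit $\int_X f\,d\mu$, independently of $m$. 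Thus $a$ is of $k$-th power invariant average under multiplications, and Theorem~\ref{mainthm_kfull} yields \eqref{eqn_mainthm_ergogic}.

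For Parts (2) and (3), the natural choice is $a_N(n) = F\bigl((\Omega(n)-k\log\log N)/(k\sqrt{\log\log N})\bigr)$, respectively this quantity multiplied by $f(T^{\Omega(n)}x)$. The key algebraic observation is that because $\Omega(n^k)=k\Omega(n)$, the normalization unwinds cleanly:
\[
a_N(n^k) = F\!\left(\frac{\Omega(n)-\log\log N}{\sqrt{\log\log N}}\right),
\]
with an additional factor $f\bigl((T^k)^{\Omega(n)}x\bigr)$ in Part (3). Consequently, the classical Erd\H{o}s--Kac theorem (respectively Loyd's theorem applied to $(X,\mu,T^k)$) identifies the average of $a_N(n^k)$ in the limit as the Gaussian integral $\frac{1}{\sqrt{2\pi}}\int F(t)e^{-t^2/2}\,dt$ (respectively, this integral times $\int_X f\,d\mu$), which is exactly the right-hand side of \eqref{eqn_mainthm_EK} and \eqref{eqn_mainthm_Loyd}.

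The main obstacle is that the family $\{a_N\}$ depends on $N$, so Theorem~\ref{mainthm_kfull} does not apply verbatim. To bridge this, I plan to observe that $F\in C_c(\R)$ is uniformly continuous and supported in a bounded interval; consequently, shifting $\Omega(n)$ by the bounded quantity $\Omega(m)$ perturbs $a_N(n)$ by $o(1)$ uniformly in $n$ as $N\to\infty$, so the family $\{a_N\}$ satisfies the invariance condition \eqref{eqn_k_invariant_cond_2} in a uniform-in-$N$ sense. I expect the proof of Theorem~\ref{mainthm_kfull}, which should proceed via a decomposition of $k$-full integers into a $k$-th power part and a bounded ``defect'' factor, to extend without difficulty to such uniformly $k$-th power invariant families; this is where the technical work of the proof concentrates. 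Once this mild uniform strengthening is in hand, Parts (2) and (3) fall out by combining it with the Erd\H{o}s--Kac theorem and Loyd's theorem on $(X,\mu,T^k)$ as indicated above.
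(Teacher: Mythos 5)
Your proposal matches the paper's proof essentially verbatim: the same choices of $a(n)$ in each part, the same use of total unique ergodicity to get unique ergodicity of $(X,\mu,T^k)$ together with $\Omega(n^k)=k\Omega(n)$, and the same appeals to Bergelson--Richter, Erd\H{o}s--Kac, and Loyd to evaluate $\lim_{N\to\infty}\frac1N\sum_{n\le N}a(n^k)$ and to verify the $k$-th power invariance (via the shift of the initial point to $T^{\Omega(m)}x$ and the uniform continuity of $F$). The only difference is that you explicitly flag the $N$-dependence of $a$ in parts (2) and (3) as requiring a uniform-in-$N$ version of Theorem~\ref{mainthm_kfull}, whereas the paper silently applies the theorem to this $N$-dependent family; your added care here is a refinement of, not a departure from, the paper's argument.
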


\begin{remark}
Recently, Donoso, Le, Moreira and  Sun \cite[Theorem~D]{DLMS2024} showed an analogue of Bergelson and Richter's theorem \eqref{eqn_BR2022thmA} along $\Omega(m^2+n^2)$: let $(X,T, \mu)$ be a uniquely ergodic topological dynamical system, then for any $f\in C(X)$ and $x\in X$, we have
	\begin{equation}\label{eqn_DLMS2024thmD}
		\lim_{N\to\infty} \BEu{ m,n\in [N]} f(T^{\Omega(m^2+n^2)}x)=\int_Xfd\mu.
	\end{equation}
In \cite{Wang2024}, the second author established a variant of \eqref{eqn_DLMS2024thmD} over co-prime integer pairs: for any $x\in X$ and any $f\in C(X)$, we have
\[
		\lim_{N\to\infty} \BEu{\substack{ m,n\in [N]\\ \gcd(m,n)=1}} f(T^{\Omega(m^2+n^2)}x)=\int_Xfd\mu.
\]
\end{remark}

\subsection{An application to Richter's generalization of the prime number theorem}

In 2021, Richter \cite{Richter2021} gave a new elementary proof of the prime number theorem and showed that for any bounded $a:\N\to\C$ one has
\begin{equation}\label{eqn_Richter}
	\frac1N\sum_{n\leq N}a(\Omega(n))= \frac1N\sum_{n\leq N}a(\Omega(n)+1)+o_{N\to\infty}(1).
\end{equation}
Similar to Theorems~\ref{mainthm_sqfree} and ~\ref{mainthm_kfull}, by the arguments in their proofs, we obtain the following analogues of  \eqref{eqn_Richter} with respect to squarefree numbers and $k$-full numbers.

\begin{theorem}\label{thm_Richter_sqfree}
	For any bounded function $a:\N\to\C$, we have
\begin{equation}\label{eqn_Richter_sqfree}
	\frac1N\sum_{\substack{n\leq N\\ n\,\text{squarefree}}}a(\Omega(n))= \frac1N\sum_{\substack{n\leq N\\ n\,\text{squarefree}}}a(\Omega(n)+1)+o_{N\to\infty}(1).
\end{equation}
Moreover, for any integer $k\ge2$, we have
\begin{equation}\label{eqn_Richter_kfull}
	\frac1{N^{1/k}}\sum_{\substack{n\leq N\\ n \,is\, k\text{-full}}}a(\Omega(n))= \frac1{N^{1/k}}\sum_{\substack{n\leq N\\ n \,is\, k\text{-full}}}a(\Omega(n)+k)+o_{N\to\infty}(1).
\end{equation}
\end{theorem}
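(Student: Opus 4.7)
The plan is to adapt Richter's proof of~\eqref{eqn_Richter} in \cite{Richter2021}, whose core ingredient is the Chebyshev identity $\log n = \sum_{d\mid n}\Lambda(d)$ combined with the prime number theorem $\theta(x) = x + o(x)$.

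For~\eqref{eqn_Richter_sqfree}: I would begin with $\sum_{n \le N,\ n\text{ sqfree}} a(\Omega(n))\log n$ and expand $\log n = \sum_{d\mid n}\Lambda(d)$. For squarefree $n$ the only divisors with $\Lambda(d)\neq0$ are the primes $d=p\mid n$, so swapping summations and using $\Omega(pm) = \Omega(m)+1$ yields
$$\sum_{\substack{n\le N\\ n\text{ sqfree}}} a(\Omega(n))\log n = \sum_{p\le N}\log p\sum_{\substack{m\le N/p,\, p\nmid m\\ m\text{ sqfree}}} a(\Omega(m)+1).$$
The coprimality constraint can be removed at cost $\ll \sum_{p\le\sqrt N}\log p\cdot N/p^2 = O(N)$. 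Applying $\theta(x) = x + o(x)$ to the inner prime sum, with a dyadic split to handle the range where $N/m$ is bounded, produces
$$\sum_{\substack{n\le N\\ n\text{ sqfree}}} a(\Omega(n))\log n = N\sum_{\substack{m\le N\\ m\text{ sqfree}}}\frac{a(\Omega(m)+1)}{m} + o(N\log N).$$
Combined with the trivial bound $\sum_{n\le N}a(\Omega(n))(\log N - \log n) = O(N)$ and the same expansion applied to the shifted function $a(\cdot+1)$, Richter's Tauberian step — a subsequential compactness argument on the rescaled partial sums, which forces any limit point to satisfy a trivial functional equation — converts these logarithmic-average statements into the Cesàro-averaged identity~\eqref{eqn_Richter_sqfree}.

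For~\eqref{eqn_Richter_kfull}: the analogue of prime-multiplication $n\mapsto pn$ is the operation $n\mapsto p^k n$ with $p\nmid n$, which preserves $k$-fullness and shifts $\Omega$ by $k$. Expanding $\log n$ over prime-power divisors $p^j\mid n$ of a $k$-full $n$, and using that $p\mid n$ forces $v_p(n)\ge k$, I parametrise $n = p^\ell m$ with $p\nmid m$, $m$ $k$-full, and $\ell\ge k$. Summing the $j=1,\dots,\ell$ contributions and using $\Omega(p^\ell m) = \ell+\Omega(m)$ gives
$$\sum_{\substack{n\le N\\ n\text{ is }k\text{-full}}} a(\Omega(n))\log n = \sum_p \log p\sum_{\ell\ge k}\ell \sum_{\substack{m\le N/p^\ell\\ m\text{ is }k\text{-full},\, p\nmid m}} a(\Omega(m)+\ell).$$
The dominant contribution is $\ell = k$; the $\ell>k$ terms are absorbed into an $O(N^{1/k})$ error via $\sum_p\log p\sum_{\ell>k}\ell\cdot(N/p^\ell)^{1/k}\ll N^{1/k}\sum_p\log p/p^{(k+1)/k}$, which converges since $(k+1)/k>1$. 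Removing the coprimality condition $p\nmid m$ in the main term costs another $O(N^{1/k})$ by an analogous calculation. Applying $\theta(x) = x + o(x)$ to the inner prime sum and running the analogue of the Tauberian step — rescaled so that $N^{1/k}$ plays the role of $N$, and using $\#\{n\le N : n\text{ is }k\text{-full}\}\sim c_kN^{1/k}$ — then delivers~\eqref{eqn_Richter_kfull}.

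The principal obstacle is the Tauberian conversion from the logarithmic-average identity back to the Cesàro-averaged one. In the integer case Richter handles this by a subsequential compactness argument that forces a functional equation on the rescaled partial sums. For the $k$-full variant this argument must be rerun at the scale $N^{1/k}$ with the weight $p^k$ replacing $p$, while maintaining uniformity over all bounded functions $a$; the squarefree case, by contrast, only introduces the harmless extra indicator $\mu^2(n)$ in Richter's framework.
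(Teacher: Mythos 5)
Your route is genuinely different from the paper's: you propose to re-run Richter's proof of \eqref{eqn_Richter} from scratch inside the squarefree and $k$-full settings (Chebyshev identity, removal of coprimality conditions, $\theta(x)=x+o(x)$, then a Tauberian conversion). The paper never touches Richter's argument. It uses Proposition~\ref{prop_sqfree} (resp.\ Proposition~\ref{mainprop}) to write the restricted average as an absolutely convergent combination of \emph{unrestricted} averages of $a(\Omega(d^2n))$ over $n\le N/d^2$ (resp.\ of $a(\Omega(m^kn_1^{k+1}\cdots n_{k-1}^{2k-1}))$ over $m\le V_k$), observes that $\Omega(d^2n)=\Omega(n)+\Omega(d^2)$ and $\Omega(m^kn_1^{k+1}\cdots n_{k-1}^{2k-1})=k\Omega(m)+\Omega(n_1^{k+1}\cdots n_{k-1}^{2k-1})$, and then applies \eqref{eqn_Richter} as a black box to the shifted functions $a(\cdot+\Omega(d^2))$ and $b(m)=a(km+\Omega(n_1^{k+1}\cdots n_{k-1}^{2k-1}))$ termwise before letting $D\to\infty$ (resp.\ $D_1,\dots,D_{k-1}\to\infty$). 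That is a few lines and needs no new analytic input.

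The gap in your proposal is the Tauberian step, which you defer entirely to ``Richter's subsequential compactness argument'' but which, on your route, is the whole content of the theorem. Your combinatorial identities and error bounds are fine; they yield
\[
\frac1N\sum_{\substack{n\le N\\ n\ \text{squarefree}}}a(\Omega(n))=\frac1{\log N}\sum_{\substack{m\le N\\ m\ \text{squarefree}}}\frac{a(\Omega(m)+1)}m+o_{N\to\infty}(1),
\]
i.e.\ a Ces\`aro average of $a(\Omega(\cdot))$ equals a \emph{logarithmic} average of the once-shifted function. Applying the same identity to $a(\cdot+1)$ relates the Ces\`aro average of $a(\Omega(\cdot)+1)$ to the logarithmic average of $a(\Omega(\cdot)+2)$ --- a \emph{different} function --- so no closed functional equation for a single limit object ever appears: writing $D_j(N)$ for the difference of the Ces\`aro averages of $a(\Omega(\cdot)+j)$ and $a(\Omega(\cdot)+j+1)$, your identities only give $D_0(N)=\frac1{\log N}\int_1^N D_1(t)\,\frac{dt}{t}+o(1)$, hence $\limsup|D_0|\le\limsup|D_1|\le\cdots$, which is vacuous. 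Closing the loop is exactly the hard part of Richter's paper (an iteration over $k$-fold prime insertions combined with concentration of $\Omega$), and rerunning it here would require threading the squarefree/$k$-full constraints and the modified weights through every stage of that iteration; none of this is carried out. Since the restricted sums reduce cleanly to unrestricted ones via the paper's Propositions, I would make that reduction and quote \eqref{eqn_Richter} rather than attempt to reprove it.
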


Taking $a(n)=(-1)^n$ in \eqref{eqn_Richter} and \eqref{eqn_Richter_sqfree} gives \eqref{eqn_pnt_lambda} and \eqref{eqn_pnt_mu}, respectively. Hence Theorem~\ref{thm_Richter_sqfree} is a generalization of the prime number theorem as well. 

In Section~\ref{sec_pre}, we will introduce  the statements of the theorems listed in Table~\ref{table_results_over_squarefree_numbers}, including some background on uniquely ergodic topological dynamical systems. Some basic facts on $k$-full numbers are also introduced. Then in Section~\ref{sec_pf_mainthm}, we will apply the technique in \cite[Theorem~6]{JiangLiu2024} to prove Theorem~\ref{mainthm_sqfree}. In Section~\ref{sec_pf_maincor}, we will prove Theorem~\ref{mainthm_sqfree_applications} by showing that the averages in \eqref{eqn_maincor_bkw} and \eqref{eqn_maincor_BRthmB} are invariant under multiplications and applying Theorem~\ref{mainthm_sqfree}. In Sections~\ref{sec_mainthm_proof} and \ref{sec_proofs}, we will prove the theorems on $k$-full numbers using the arguments similar to the proofs of Theorems~\ref{mainthm_sqfree} and \ref{mainthm_sqfree_applications}. Finally, we will apply the methods in the proof of Theorems~\ref{mainthm_sqfree} and ~\ref{mainthm_kfull} to prove Theorem~\ref{thm_Richter_sqfree} in the last section.

\section{Preliminaries}
\label{sec_pre}

\subsection{Uniquely ergodic topological dynamical system}

Let $X$ be a compact metric space, let $T\colon X\to X$ be a continuous map, and let $C(X)$ be the set of continuous functions defined on $X$. Since $T^m\circ T^n=T^{m+n}$ for any $m, n\in \N$, the transformation $T$ naturally induces an action of $(\N, +)$ on $X$. We call the pair $(X, T)$ an \textit{additive topological dynamical system}. A Borel probability measure $\mu$ on $X$ is said to be \textit{$T$-invariant} if $\mu(T^{-1}A)=\mu(A)$ for all Borel measurable subsets $A\subset X$. By the Bogolyubov-Krylov Theorem (see for example \cite[Corollary~6.9.1]{Walters1982}), every additive
topological dynamical system $(X, T)$ possesses at least one $T$-invariant Borel probability
measure. If $(X, T)$ admits only one such measure, then it is called \textit{uniquely ergodic}. It is well-known  that $(X, \mu, T)$ is uniquely ergodic if and only if 
\[
	\lim_{N \to \infty} \frac1N\sum_{n=1}^N  f(T^n x) = \int_X f \, d\mu
\]
holds for  all $x \in X$ and $f\in C(X)$, see for example \cite[Theorem~6.19]{Walters1982}. 

A uniquely ergodic system $(X, \mu, T)$ is called a \textit{totally uniquely ergodic system}, if $(X,\mu, T^n)$ is uniquely ergodic for every $n\in \N$. Given a real number $\alpha\in \R$, let $X = [0,1]$, endowed with the Lebesgue measure $\mu$, and let $T_\alpha : X\to X$ be the map defined by $T_\alpha x=x+\alpha \mod{1}$. This system is called a \textit{circle rotation}. Then the system $(X, \mu, T_\alpha)$ is totally uniquely ergodic if and only if $\alpha$ is irrational. Let $X=\{0,1\}$ be a two-point system with $\mu(\set{0})=\mu(\set{1})=1/2$ and $T: x\mapsto x+1 \mod{2}$. Then this system is uniquely ergodic but not totally uniquely ergodic. 

A \textit{multiplicative topological dynamical system} is a pair $(Y, S)$, where $Y$ is a compact metric space and $S=(S_n)_{n\in N}$ is a sequence of continuous maps on $Y$ satisfying $S_{nm}=S_n\circ S_m$ for all $m, n\in \N$. That is, $S$ is an action of $(\N,\cdot)$ on $Y$ by continuous maps.  We say that $S$ is \textit{finitely generated} if the set $\set{S_p: p\in \cP}$ of generators of $S$ is finite. And $(Y, S)$ is called \textit{strongly uniquely ergodic} if there is only one Borel probability measure on $Y$, say  $\nu$, satisfying that $\nu$ is invariant under $S_p, \forall p\in P$ for  some set of primes $P\subset \cP$ with $\sum_{p\notin P}1/p<\infty$. Using combinatorial techniques, Bergelson and Richter established the following ergodic theorem on finitely generated and strongly uniquely ergodic multiplicative topological dynamical systems. 

\begin{theorem}[{\cite[Theorem~B]{BergelsonRichter2022}}]
\label{thm_BRThmB}
Let $(Y,\nu, S)$ be a multiplicative, finitely generated and strongly uniquely ergodic topological dynamical system. Then 
\[
	\lim_{N\to\infty}\frac1N\sum_{n=1}^N g(S_ny)	= \int_Y g\,d\nu
\]
for every $y\in Y$ and $g\in C(Y)$.
\end{theorem}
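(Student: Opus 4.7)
The plan is to exploit the finite generation of $S$ to reduce Theorem~\ref{thm_BRThmB} to a joint application of the additive Bergelson-Richter theorem \eqref{eqn_BR2022thmA} along the prime classes of $\cP$. First I would enumerate the distinct maps $\{S_p : p \in \cP\}$ as $T_1, \dots, T_k$ and set $P_i = \{p \in \cP : S_p = T_i\}$. The multiplicativity $S_{pq} = S_p \circ S_q = S_q \circ S_p$ forces the $T_i$ to commute on each orbit closure, and writing $\Omega_i(n)$ for the number of prime factors of $n$ (with multiplicity) lying in $P_i$, one has $S_n y = T_1^{\Omega_1(n)} \circ \cdots \circ T_k^{\Omega_k(n)} y$ and $\Omega(n) = \sum_i \Omega_i(n)$.

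Second, I would clear away the exceptional primes. By strong unique ergodicity there exists $P \subset \cP$ with $\sum_{p \notin P} 1/p < \infty$ on which $\nu$ is invariant under every $S_p$. A Mertens-type estimate bounds the density of $n \leq N$ divisible by some prime outside $P$ lying above a threshold $M$, and letting $M \to \infty$ after $N \to \infty$ together with $\|g\|_\infty < \infty$ absorbs these $n$ into a vanishing error. So it suffices to treat the case where each $T_i$ preserves $\nu$ and to establish
\[
\lim_{N \to \infty} \frac{1}{N} \sum_{n=1}^{N} g\of{T_1^{\Omega_1(n)} \cdots T_k^{\Omega_k(n)} y} = \int_Y g \, d\nu.
\]

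Third, I would establish this multidimensional average by induction on $k$. For $k = 1$ the sequence $\Omega_1(n)$ is just $\Omega(n)$ up to an error of bounded density, so the claim is \eqref{eqn_BR2022thmA}. For the inductive step, I would approximate $g$ by a linear combination of functions factoring through the partial orbit under $(T_1, \dots, T_{k-1})$, freeze $\Omega_k(n)$, and apply \eqref{eqn_BR2022thmA} to the Koopman operator of $T_k$ on the tower over the $(k-1)$-dimensional system; the asymptotic independence of $\Omega_k$ from $(\Omega_1, \dots, \Omega_{k-1})$ — which follows from the disjointness $P_k \cap (P_1 \cup \cdots \cup P_{k-1}) = \emptyset$ together with an Erd\H{o}s-Kac-style joint central limit theorem for the additive functions restricted to each prime class — would allow one to interchange the averages and collapse the nested limits to $\int_Y g\, d\nu$.

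The main obstacle is precisely the inductive step: one needs a joint ergodic theorem for the commuting family $(T_1, \dots, T_k)$ driven by the vector $(\Omega_1(n), \dots, \Omega_k(n))$, and this does not follow mechanically from the one-variable \eqref{eqn_BR2022thmA} because the $\Omega_i$ are not genuinely independent — only asymptotically so after normalization. Making this rigorous will likely require a Katai-style orthogonality criterion or a van der Corput-type inequality to decouple the different generators and convert multiplicative into additive averaging, which is the technical heart of any proof along these lines.
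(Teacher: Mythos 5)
First, a point of orientation: the paper does not prove this statement at all --- it is quoted verbatim from Bergelson and Richter \cite[Theorem~B]{BergelsonRichter2022} and used as a black box in the proof of Theorem~\ref{mainthm_sqfree_applications}(2), so there is no internal proof to compare against. Measured against Bergelson and Richter's actual argument, your proposal runs in the opposite direction: their Theorem~A (equation \eqref{eqn_BR2022thmA}) is the \emph{special case} of Theorem~B in which the multiplicative action has a single generator ($S_p=T$ for all $p$, so that $S_n=T^{\Omega(n)}$), and both results are deduced from a combinatorial averaging lemma of Tur\'an--Kubilius type which shows that every weak-$*$ limit point of the empirical measures $\frac1N\sum_{n\le N}\delta_{S_ny}$ is invariant under $S_p$ for all $p$ in a set $P$ with $\sum_{p\notin P}1/p<\infty$; strong unique ergodicity then identifies that limit as $\nu$. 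Bootstrapping Theorem~B from Theorem~A is therefore structurally backwards.

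The concrete gap is in your inductive step, and it is more serious than you indicate. To apply \eqref{eqn_BR2022thmA} to the generator $T_k$ you need the \emph{additive} system $(Y,T_k)$ to be uniquely ergodic, but strong unique ergodicity of $(Y,S)$ is a joint condition on the whole family $\set{S_p}$ and gives no control over a single $T_i$: for instance $Y=X_1\times\cdots\times X_k$ with $T_i$ acting only on the $i$-th coordinate can be strongly uniquely ergodic as a multiplicative system while each $(Y,T_i)$ carries a large simplex of invariant measures. So the base case $k=1$ is the only situation in which Theorem~A applies, and the induction cannot get started. A joint Erd\H{o}s--Kac central limit theorem for $(\Omega_1,\dots,\Omega_k)$ would not repair this: it controls the normalized fluctuations of the exponents, whereas the ergodic average requires equidistribution of the unnormalized orbit, i.e.\ invariance of the limit measures under each $S_p$ --- exactly the content of the combinatorial lemma you would be trying to avoid. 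There is also a smaller error in your second step: if some prime $q\notin P$ is small, the set of $n$ divisible by $q$ has positive density and cannot be absorbed into a vanishing error by sending a threshold $M\to\infty$; the exceptional primes must be handled through the uniqueness clause in the definition of strong unique ergodicity, not by discarding the integers they divide.
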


In particular, by Theorem~\ref{thm_BRThmB} they proved the following generalization of \eqref{eqn_BR2022thmA} along arithmetic progressions. 

\begin{theorem}[{\cite[Corollary~1.16]{BergelsonRichter2022}}]
    Let $(X, \mu, T)$ a uniquely ergodic system, then we have
\[
	\lim_{N\to\infty}\frac1N\sum_{n=1}^N f(T^{\Omega(mn+r)}x)=\int_X f \,d\mu
\]
for any $m\in \N, 0\leq r\leq m-1$, $f\in C(X)$ and $x\in X$.
\end{theorem}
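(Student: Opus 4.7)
The plan is to deduce this corollary from Theorem~\ref{thm_BRThmB} by constructing an auxiliary multiplicative topological dynamical system $(Y, \nu, S)$ whose ergodic averages, via a suitable choice of test function, recover the left-hand side of the corollary. A change of variables $k = mn + r$ shows that the statement is equivalent to proving that, for each $r$ with $0 \le r \le m-1$,
\[
	\lim_{N\to\infty} \frac{1}{N} \sum_{\substack{n \le N\\ n \equiv r \pmod{m}}} f(T^{\Omega(n)} x) = \frac{1}{m} \int_X f\, d\mu,
\]
so the task reduces to establishing this residue-class ergodic average along the sequence $\{n : n \equiv r \pmod m\}$.

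A natural candidate for $Y$ is $X \times \Z/m\Z$ equipped with the $(\N, \cdot)$-action $S_n(x, a) := (T^{\Omega(n)} x, \, na \bmod m)$: complete additivity of $\Omega$ makes $(Y, S)$ multiplicative (i.e.\ $S_{nn'} = S_n \circ S_{n'}$), and since $S_p$ depends only on the residue class of $p$ modulo $m$, the family $(S_n)_{n\in\N}$ is finitely generated. A preliminary reduction to the case $\gcd(r, m) = 1$ via the identity $\Omega(mn+r) = \Omega(d) + \Omega((m/d)n + r/d)$ with $d = \gcd(r, m)$, absorbing the factor $T^{\Omega(d)}$ into the test function by replacing $f$ with $f \circ T^{\Omega(d)}$, simplifies matters by placing $r$ in $(\Z/m\Z)^{*}$.

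The main obstacle, as I expect, is the verification that $(Y, \nu, S)$ is strongly uniquely ergodic for an appropriate choice of $\nu$, possibly after restricting $Y$ to an $S$-invariant subspace such as $X \times (\Z/m\Z)^{*}$ on which the $S_p$-action for primes $p \nmid m$ is a bijection. The analytic input is Dirichlet's theorem on primes in arithmetic progressions, which ensures that the image in $(\Z/m\Z)^{*}$ of the set $P := \{p \in \cP : p \nmid m\}$ (satisfying $\sum_{p \notin P} 1/p < \infty$) is the full group of units; combined with unique ergodicity of $(X, T)$ applied to fiber measures (via the existence of primes $p \equiv 1 \pmod m$), a disintegration argument pins down any $\{S_p : p \in P\}$-invariant Borel probability measure to coincide with $\nu$. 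Once this is in hand, Theorem~\ref{thm_BRThmB} applied to $(Y, \nu, S)$ with the continuous test function $g(y, a) := f(y) \cdot 1_{a = r}$ and initial point $(x, 1) \in Y$ produces the desired residue-class average on the left-hand side above, from which the corollary follows by the change of variables recorded at the outset.
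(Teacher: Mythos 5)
First, a point of comparison: the paper does not actually prove this statement --- it is quoted verbatim from \cite[Corollary~1.16]{BergelsonRichter2022} in the preliminaries --- so there is no in-paper argument to measure yours against, and your proposal must stand on its own. Much of your setup is sound: the reduction to $\gcd(r,m)=1$ via $\Omega(mn+r)=\Omega(d)+\Omega((m/d)n+r/d)$ is correct, the change of variables is correct, and the system $Y=X\times\Z/m\Z$ with $S_n(x,a)=(T^{\Omega(n)}x,\,na\bmod m)$, test function $g(x,a)=f(x)1_{a=r}$ and base point $(x,1)$ is exactly the right bookkeeping device: the desired limit is $\int g\,d(\mu\times u_m)$, where $u_m$ is the uniform measure on $\Z/m\Z$.

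The gap is precisely at the step you flagged, and it is not repairable in the form you propose. The system $(X\times\Z/m\Z,S)$ is \emph{not} strongly uniquely ergodic: for every divisor $e\mid m$ the measure $\mu\times u_e$, with $u_e$ uniform on $\{a:\gcd(a,m)=e\}$, is invariant under $S_p$ for all $p\nmid m$ (and $\mu\times\delta_0$ is invariant under every $S_p$), so the competitors form a whole simplex rather than a single measure. Your Dirichlet-plus-disintegration argument only pins down the fibre measures and the conditional uniformity \emph{within} each set $\{a:\gcd(a,m)=e\}$; it says nothing about how mass is distributed \emph{across} these sets. The suggested restriction to $X\times(\Z/m\Z)^{*}$ does not produce a multiplicative system, since $S_n$ maps this set outside itself whenever $\gcd(n,m)>1$, and the orbit of $(x,1)$ meets every fibre, so passing to its closure does not remove the competing measures either. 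To close the gap you need the one extra piece of information that singles out $\mu\times u_m$: the $\Z/m\Z$-marginal of any weak-$*$ limit of $\frac1N\sum_{n\le N}\delta_{S_n(x,1)}$ is uniform, because each residue class has density $1/m$; combined with the fact that such limit points are invariant under $S_p$ for all $p$ outside a set with $\sum 1/p<\infty$ --- which is the technical heart of the \emph{proof} of Theorem~\ref{thm_BRThmB}, not a consequence of its statement --- this identifies the limit measure. Alternatively, after your reduction one can bypass Theorem~\ref{thm_BRThmB} for the principal part by expanding $1_{n\equiv r\,(m)}$ over divisors and applying \eqref{eqn_BR2022thmA} at the shifted points $T^{\Omega(d)}x$, $d\mid m$, treating the oscillating parts separately. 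As written, invoking Theorem~\ref{thm_BRThmB} as a black box is not justified.
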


\subsection{Erd\H{o}s-Kac Theorem}

The well-known Erd\H{o}s-Kac Theorem \cite{ErdosKac1940} says that the number of prime divisors $\Omega(n)$ of $n$ satisfies the following Gaussian distribution:

\begin{theorem}[Erd\H{o}s-Kac Theorem]
\label{thm_EK}
    Let $C_c(\R)$ denote the set of compactly supported continuous functions on $\R$, then we have 
	\begin{equation}\label{eqn_EK_origin}
	\lim_{N \to \infty} \frac1N\sum_{n=1}^N F \Big( \frac{\Omega(n) - \log \log N }{\sqrt{\log \log N}} \Big)=\frac{1}{\sqrt{2\pi}} \int_{-\infty}^{\infty} F(t) e^{-t^2/2} \, dt
\end{equation}
for any $F\in C_c(\R)$.
\end{theorem}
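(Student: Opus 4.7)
The plan is to use the classical method of moments of Erd\H{o}s and Kac. First, I would reduce to the additive function $\omega(n)=\sum_{p\mid n}1$ counting \emph{distinct} prime divisors. Since $\Omega(n)-\omega(n)=\sum_{p,\,k\ge 2}\mathbf 1_{p^k\mid n}$, one has
\[
\frac{1}{N}\sum_{n\leq N}(\Omega(n)-\omega(n))=\sum_{p}\sum_{k\ge 2}\frac{1}{p^k}+o(1)=O(1),
\]
so $(\Omega(n)-\omega(n))/\sqrt{\log\log N}\to 0$ in $L^1$; a Chebyshev argument combined with the uniform continuity of $F\in C_c(\R)$ then reduces the problem to proving the Gaussian limit with $\omega(n)$ in place of $\Omega(n)$.

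Next I introduce a truncation. Choose a slowly growing threshold $y=y(N)\to\infty$ (for instance $y=N^{1/\log\log N}$), and split $\omega=\omega_y+\omega^y$, where $\omega_y(n)=\sum_{p\le y,\,p\mid n}1$. Using $\#\{n\leq N:d\mid n\}=N/d+O(1)$ for each $d$, together with Mertens' theorem $\sum_{p\leq y}1/p=\log\log y+M+o(1)$, one obtains the tail estimate
\[
\frac{1}{N}\sum_{n\leq N}\omega^y(n)^2=\sum_{y<p\leq N}\frac{1}{p}+O(1)=o(\log\log N),
\]
which shows that $\omega^y$ is negligible after dividing by $\sqrt{\log\log N}$ and that $\log\log y=\log\log N+o(1)$ for this choice of $y$. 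It then remains, for each fixed integer $k\geq 0$, to compute the centered moments of $\omega_y(n)$. Expanding multinomially $\omega_y(n)-\sum_{p\leq y}1/p=\sum_{p\leq y}(\mathbf 1_{p\mid n}-1/p)$ and using $\#\{n\leq N:p_1\cdots p_j\mid n\}=N/(p_1\cdots p_j)+O(1)$ for distinct primes $p_1,\dots,p_j\le y$ (which is legitimate since $y^k=o(N)$ for each fixed $k$), one verifies that the $k$-th moment divided by $(\log\log N)^{k/2}$ converges to the $k$-th moment of the standard Gaussian. Combinatorially this is an arithmetic Wick theorem: only pairings among the $k$ summation indices produce the maximal power $(\log\log N)^{k/2}$, via the relation $\sum_{p\leq y}\tfrac{1}{p}(1-\tfrac{1}{p})\sim\log\log N$.

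Finally, since the standard Gaussian distribution is determined by its moments, moment convergence implies weak convergence of the empirical distribution of $(\omega(n)-\log\log N)/\sqrt{\log\log N}$ to the standard normal law; applying this to the bounded continuous test function $F\in C_c(\R)$ yields \eqref{eqn_EK_origin}. The main obstacle is the combinatorial moment computation: one must simultaneously match the leading $(\log\log N)^{k/2}$ contribution term by term, identifying the pairings as the surviving terms and showing that all non-pairing contributions are of strictly lower order, while carefully controlling the interplay between the truncation parameter $y$, the arithmetic error $O(1)$ from the floor function in $\#\{n\leq N:d\mid n\}$, and the Mertens asymptotic, so that no factor of $\log\log N$ is lost along the way.
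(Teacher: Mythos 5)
The paper does not actually prove this statement: Theorem~\ref{thm_EK} is quoted as a known result with a citation to Erd\H{o}s--Kac and is used as a black box later (e.g.\ in the derivation of \eqref{eqn_mainthm_EK}), so there is no in-paper proof to compare against. Judged on its own, your sketch is the standard moment-method proof (in the Halberstam/Granville--Soundararajan style) and its architecture is sound: the reduction from $\Omega$ to $\omega$ via the $O(1)$ mean of $\Omega-\omega$, the truncation at $y=N^{1/\log\log N}$, the Wick-type identification of pairings as the dominant contributions to the centered moments, and the final appeal to the moment method all fit together correctly, with the key point (that $\pi(y)^k=o(N)$ for fixed $k$ controls the floor-function errors) correctly identified. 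Two minor imprecisions are worth fixing: (i) with your choice of $y$ one has $\log\log y=\log\log N-\log\log\log N$, which is not $\log\log N+o(1)$; what you actually need, and what is true, is only that the resulting shift in the centering is $o\big(\sqrt{\log\log N}\big)$. (ii) In the displayed second-moment bound for $\omega^y$, the off-diagonal terms contribute $O\big((\log\log\log N)^2\big)$ rather than $O(1)$; the conclusion $o(\log\log N)$ still holds. Neither slip affects the validity of the argument.
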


This result is the start of the probabilistic number theory. Moreover, $\Omega(n)$ is well-distributed in residue classes. By \eqref{eqn_pnt_lambda},  the prime number theorem is equivalent to the assertion that $\Omega(n)$ is evenly distributed modulo 2.  In general, by the Pillai-Selberg Theorem \cite{Pillai1940, Selberg1939} the set $\set{n\in \N: \Omega(n)\equiv r \mod m}$ has natural density equal to $1/m$ for all $m\in\N$ and all $r\in \set{0,1,\dots, m-1}$. In 1985, Erd\H{o}s and Pomerance \cite{ErdosPomerance1985} proved the following Erd\H{o}s-Kac type theorem for $\Omega(\varphi(n))$:

\begin{theorem}[Erd\H{o}s-Pomerance Theorem]
\label{thm_EP}
We have
\[
	\lim_{N \to \infty} \frac1N\sum_{n=1}^N F\Big( \frac{\Omega(\varphi(n)) - \frac12(\log \log N)^2 }{\frac1{\sqrt3}(\log \log N)^{3/2}} \Big)
	=
	\frac{1}{\sqrt{2\pi}} \int_{-\infty}^{\infty} F(t) e^{-t^2/2} \, dt
\]
for any $F\in C_c(\R)$. 
\end{theorem}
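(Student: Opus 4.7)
The plan is to reduce the study of $\Omega(\varphi(n))$ to a strongly additive function over the prime divisors of $n$, and then establish a Gaussian limit for that function by a Tur\'an--Kubilius type argument.

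First, writing $n = \prod_p p^{v_p(n)}$ gives $\varphi(n) = \prod_{p\mid n} p^{v_p(n)-1}(p-1)$, hence
\[
\Omega(\varphi(n)) = \bigl(\Omega(n) - \omega(n)\bigr) + \sum_{p \mid n}\Omega(p-1).
\]
The term $\Omega(n)-\omega(n)$ depends only on the squarefull part of $n$ and has bounded mean and second moment, so after normalization by $(\log\log N)^{3/2}$ it contributes negligibly. This reduces matters to proving an Erd\H{o}s--Kac type theorem for the strongly additive function $G(n) := \sum_{p\mid n}\Omega(p-1)$.

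Second, I would compute the first two moments of $G$ by exchanging summation. Using the fact that $\Omega(p-1)$ has normal order $\log\log p$ over primes $p$ (a consequence of applying Erd\H{o}s--Kac to shifted primes, or of the original Hardy--Ramanujan argument), partial summation with $\pi(t)\sim t/\log t$ yields mean $\tfrac{1}{2}(\log\log N)^2 + O(\log\log N)$; a similar but more delicate calculation of the second moment, in which the cross terms $p\neq q$ dominate, gives variance $\tfrac{1}{3}(\log\log N)^3 + o((\log\log N)^3)$. These match the scaling constants in the theorem statement.

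Third, I would establish the Gaussian limit via the Kubilius model. Splitting $\Omega(p-1) = \log\log p + \xi(p)$, a second moment estimate shows that $\sum_{p\mid n}\xi(p)$ has variance $O((\log\log N)^2)$, of lower order than $(\log\log N)^3$ and hence discardable. The remaining sum $\sum_{p\mid n}\log\log p$ is a strongly additive function with slowly varying bounded summands, to which the classical method applies: truncate to primes in $[y(N), N^{1/u(N)}]$ with $y,u\to\infty$ slowly, model the events $\{p\mid n\}$ by independent Bernoullis of parameter $1/p$, and invoke Lindeberg's central limit theorem, since each summand is pointwise $O(\log\log N)$ and thus of lower order than the standard deviation $(\log\log N)^{3/2}$. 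The main obstacle is this replacement step: verifying that the fluctuation $\xi(p)$ truly averages out inside $G(n)$ rather than biasing the limit, given the non-trivial correlations between the events $\{p\mid n\}$ and the values $\xi(p)$. I would control this by a direct second-moment argument, bounding $\frac{1}{N}\sum_{n\le N}\bigl(\sum_{p\mid n}\xi(p)\bigr)^2$ via expansion and the known variance of $\Omega(p-1)$ along primes in short intervals, which yields a bound of order $(\log\log N)^2$ and therefore safely below the normalization.
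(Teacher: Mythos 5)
The paper does not prove this statement: Theorem~\ref{thm_EP} is quoted as background and attributed to Erd\H{o}s and Pomerance \cite{ErdosPomerance1985} (it is the case $k=1$ of Theorem~\ref{thm_BKW}), so there is no in-paper proof to compare yours against; I can only assess your sketch on its own terms.

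Your outline follows the classical Erd\H{o}s--Pomerance strategy and its skeleton is sound: the decomposition $\Omega(\varphi(n)) = (\Omega(n)-\omega(n)) + \sum_{p\mid n}\Omega(p-1)$ is correct, the first term is negligible, and the constants $\tfrac12(\log\log N)^2$ and $\tfrac13(\log\log N)^3$ do arise from $\sum_{p\le N}(\log\log p)/p$ and $\sum_{p\le N}(\log\log p)^2/p$. Two points need attention. First, the real analytic content is hidden in the phrase ``$\Omega(p-1)$ has normal order $\log\log p$'': every moment computation for $G(n)=\sum_{p\mid n}\Omega(p-1)$ unfolds into sums of the shape $\sum_{q^a}\sum_{p\equiv 1\,(q^a),\ p\le N}1/p$, and justifying these uniformly in the modulus requires genuine sieve input (Brun--Titchmarsh for large moduli together with Mertens-type estimates in progressions for small ones); moreover ``applying Erd\H{o}s--Kac to shifted primes'' invokes Halberstam's theorem, which is deeper than the first- and second-moment bounds you actually need, so you should instead prove or cite those moments directly. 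Second, a conceptual slip: in the variance the cross terms $p\neq q$ do not dominate --- they contribute $\sim\tfrac14(\log\log N)^4$, which must be shown to cancel against $(\frac1N\sum_{n\le N}G(n))^2$, and the surviving main term $\tfrac13(\log\log N)^3$ comes from the diagonal $\sum_p \Omega(p-1)^2/p$; verifying that cancellation (the Tur\'an--Kubilius step, replacing $\lfloor N/(pq)\rfloor/N$ by $1/(pq)$ with controlled error) is exactly where the uniformity above is consumed. With those inputs supplied, your Kubilius-model/Lindeberg endgame for $\sum_{p\mid n}\log\log p$ and the $L^2$ discarding of $\sum_{p\mid n}\xi(p)$ are standard and correct.
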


Let $\varphi_k(n)$ be the $k$-fold iterate of Euler's totient function $\varphi$, $k\ge1$. In 1991, K\'atai \cite{Katai1991} proved an Erd\H{o}s-Kac type theorem for $\Omega(\varphi_2(n))$. Furthermore, in 1997, Bassily, K\'atai, and Wijsmuller \cite{BKW1997} proved the following Erd\H{o}s-Kac type theorem for $\Omega(\varphi_k(n))$:

\begin{theorem}[{\cite[Theorem~1]{BKW1997}}] 
\label{thm_BKW}
	Let $k\ge1$. For any $F\in C_c(\R)$, we have 
\[
	\lim_{N \to \infty} \frac1N\sum_{n=1}^N F\Big( \frac{\Omega(\varphi_k(n)) - a_k(\log \log N)^{k+1} }{b_k(\log \log N)^{k+1/2}} \Big)
	=
	\frac{1}{\sqrt{2\pi}} \int_{-\infty}^{\infty} F(t) e^{-t^2/2} \, dt,
\]
where $a_k$ and $b_k$ are defined as in \eqref{akbk}.
\end{theorem}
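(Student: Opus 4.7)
The plan is to establish the asymptotic Gaussianity of $\omega(\varphi_k(n))$---which agrees with $\Omega(\varphi_k(n))$ up to a bounded-average error from prime powers---via the method of moments. Writing $\omega(\varphi_k(n))=\sum_{q\text{ prime}}\mathbf{1}_{q\mid\varphi_k(n)}$, I would for each fixed $r\ge 1$ compute the normalised $r$-th moment
\[
M_r(N):=\frac{1}{N}\sum_{n\le N}\left(\frac{\omega(\varphi_k(n))-a_k(\log\log N)^{k+1}}{b_k(\log\log N)^{k+1/2}}\right)^{r}
\]
and show $M_r(N)\to m_r$, the $r$-th moment of the standard normal. Since $F\in C_c(\R)$ is bounded and can be uniformly approximated by polynomials on any compact set, convergence of all polynomial moments together with a standard truncation argument delivers the weak-convergence statement.

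For the mean and variance I would use the chain characterisation
\[
q\mid \varphi_k(n) \Longleftrightarrow \text{there exist primes } p_0,p_1,\dots,p_k=q \text{ with } p_0\mid n \text{ and } p_{i+1}\mid p_i-1,
\]
valid modulo negligible prime-power contributions, convert the $n$-summation into a product of prime sums, and invoke Mertens in arithmetic progressions in the form $\sum_{p\le x,\,p\equiv 1\,(\on{mod}\,q)}1/p\sim (\log\log x)/\varphi(q)$. Cascading this estimate $k+1$ times and carrying out the resulting Abel summations reduces the mean to an ordered-simplex integral $\int_{0<t_1<\cdots<t_{k+1}<1}dt_1\cdots dt_{k+1}=1/(k+1)!$, which identifies $a_k$. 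An analogous two-chain computation produces the variance: pairs of chains sharing a common base prime contribute a Beta-type integral that evaluates to $(k!)^2/(2k+1)$, identifying $b_k$.

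The main obstacle is that the Bernoullis $X_q(n):=\mathbf{1}_{q\mid\varphi_k(n)}$ are not independent, so higher moments do not factor across $r$-tuples of primes. My strategy would be to introduce the Kubilius probabilistic model, replacing each $X_q$ by an independent Bernoulli $\tilde X_q$ of the same mean for $q$ up to a threshold such as $q\le\exp(\exp(\sqrt{\log\log N}))$, and to control the truncation via a Tur\'an--Kubilius inequality tailored to $\varphi_k$. I would execute this by induction on $k$: the base case $k=0$ is classical Erd\H{o}s--Kac (Theorem~\ref{thm_EK}), and the inductive step uses that $\omega(\varphi_{k-1}(m))$ is already concentrated on a density-one set of $m$, so that the top level of each chain decouples from those below. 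The hardest combinatorial input is bounding the contribution of chain configurations in which two or more of the $r$ chains share an interior prime; these must be shown to be of strictly lower order in $\log\log N$, so that after normalization only the ``fully matched'' pairings survive and reproduce the Wick formula $m_r=(r-1)!!$ for even $r$ and $0$ for odd $r$.
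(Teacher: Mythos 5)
The paper does not prove this statement: Theorem~\ref{thm_BKW} is quoted verbatim from Bassily, K\'atai, and Wijsmuller \cite{BKW1997} and is used as a black box in the proof of Theorem~\ref{mainthm_sqfree_applications}. So there is no internal proof to compare yours against; the only question is whether your sketch would itself constitute a proof, and as it stands it would not. Your outline correctly reproduces the architecture of the Erd\H{o}s--Pomerance/K\'atai/BKW argument (chains of primes $p_0\mid n$, $p_{i+1}\mid p_i-1$; method of moments; Kubilius model), and your identification of the constants is right: the ordered-simplex volume $1/(k+1)!$ gives $a_k$, and the two-chain second moment gives $b_k^2=(k!)^2/(2k+1)$. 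But the two places where you write ``invoke Mertens in arithmetic progressions'' and ``these must be shown to be of strictly lower order'' are precisely where all of the work in the cited papers lives, and neither step is routine.

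First, the estimate $\sum_{p\le x,\ p\equiv 1\pmod q}1/p\sim(\log\log x)/\varphi(q)$ is available uniformly only for $q$ up to a fixed power of $\log x$ (Siegel--Walfisz), whereas the chains involve primes $q$ of essentially unrestricted size; the large-$q$ contribution has to be controlled by Brun--Titchmarsh upper bounds together with a reordering of the summation (summing $\omega(p-1)/p$ over $p$ and using normal-order information for $\omega$ on shifted primes). Moreover, at your inductive step you need Erd\H{o}s--Kac-type concentration for $\omega(\varphi_{k-1}(p-1))$ as $p$ ranges over \emph{primes}, which is a genuinely harder statement than concentration on a density-one set of integers and does not follow from the inductive hypothesis as you state it. Second, the higher-moment combinatorics --- showing that only fully matched pairings of chains survive and that configurations sharing an interior prime are of lower order in $\log\log N$ --- is asserted rather than carried out, and the Tur\'an--Kubilius inequality ``tailored to $\varphi_k$'' is itself a nontrivial lemma. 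In short, the route is the historically correct one, but the proposal is a program rather than a proof; for the purposes of this paper the right move is exactly what the authors do, namely to cite \cite{BKW1997}.
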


We refer readers to \cite{MMP2023} for more results on the Erd\H{o}s-Kac Theorem.

\subsection{Loyd Theorem}

Two sequences $a, b: \N \to \C$ are called \textit{asymptotically uncorrelated} if 
\begin{equation}
    \lim_{N \to \infty} \frac1N\sum_{n=1}^N a(n) \overline{b(n)} = \bigg( \lim_{N \to \infty} \frac1N\sum_{n=1}^N a(n) \bigg)\bigg( \lim_{N \to \infty} \frac1N\sum_{n=1}^N \overline{b(n)} \bigg).
\end{equation}

Recently, Loyd \cite{Loyd2022} shows that the sequences appearing in the Erd\H{o}s-Kac Theorem \eqref{eqn_EK_origin} and the Bergelson-Richter Theorem \eqref{eqn_BR2022thmA} are asymptotically uncorrelated:

\begin{theorem}[{\cite[Theorem~1.3]{Loyd2022}}]
Let $(X, \mu, T)$ be a uniquely ergodic topological dynamical system. Then we have
	\begin{equation}\label{eqn_Loyd2022_origin}
\lim_{N \to \infty} \frac1N\sum_{n=1}^N F \Big( \frac{\Omega(n) - \log \log N }{\sqrt{\log \log N}} \Big) f(T^{\Omega(n) }x)
	=\Big(\frac{1}{\sqrt{2\pi}} \int_{-\infty}^{\infty} F(t) e^{-t^2/2} \, dt\Big)\Big( \int_X f \, d\mu \Big)
\end{equation}
for any $F\in C_c(\R), f\in C(X)$ and $x\in X$.
\end{theorem}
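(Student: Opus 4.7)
The plan is to decouple the Gaussian factor $F(Z_N(n))$, with $Z_N(n):=(\Omega(n)-\log\log N)/\sqrt{\log\log N}$, from the dynamical factor $f(T^{\Omega(n)}x)$ by foliating the sum along the level sets of $\Omega$. Setting $\phi(t):=e^{-t^2/2}/\sqrt{2\pi}$, I would first reduce $F\in C_c(\R)$ to an indicator $\mathbf{1}_{[a,b]}$: the Erd\H os--Kac Theorem~\ref{thm_EK} says that the pushforward of normalized counting measure on $[N]$ under $Z_N$ converges weakly to the standard Gaussian, so by tightness $F$ can be squeezed between finite linear combinations $F_{\pm}=\sum_j c^{\pm}_j\mathbf{1}_{I_j}$ of indicators of bounded intervals with $\int_{\R}(F_+-F_-)\phi(t)\,dt<\varepsilon$. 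Since $f$ is bounded, the triangle inequality combined with Erd\H os--Kac applied to $F_+-F_-$ reduces \eqref{eqn_Loyd2022_origin} to proving, for each finite interval $[a,b]$, that
\[
\frac{1}{N}\sum_{\substack{n\le N\\ \Omega(n)\in I_N}} f(T^{\Omega(n)}x)\;\longrightarrow\; \Big(\int_a^b\phi(t)\,dt\Big)\int_X f\,d\mu,
\]
where $I_N:=[\log\log N+a\sqrt{\log\log N},\,\log\log N+b\sqrt{\log\log N}]$.

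Because $\Omega(n)$ is constant on each level set, the left-hand side rewrites as
\[
\sum_{k\in I_N\cap\Z} f(T^k x)\,\frac{\pi_k(N)}{N},\qquad \pi_k(N):=\#\{n\le N:\Omega(n)=k\}.
\]
The Sathe--Selberg asymptotic, combined with Stirling inside the Gaussian window, yields $\pi_k(N)/N = \phi(Z_N(k))/\sqrt{\log\log N}\cdot(1+o(1))$ uniformly for $k\in I_N$. I would then introduce an auxiliary block size $L_N\to\infty$ with $L_N/\sqrt{\log\log N}\to 0$ and partition $I_N\cap\Z$ into consecutive blocks $B_1,B_2,\ldots$ of length $L_N$; on each $B_j$ the Lipschitz weight $\phi(Z_N(k))$ is constant up to $O(L_N/\sqrt{\log\log N})$, say $\phi_j$ at the midpoint $K_j$. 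Unique ergodicity of $(X,\mu,T)$ provides $\frac{1}{L}\sum_{k=0}^{L-1} f(T^k y)\to\int_X f\,d\mu$ uniformly in $y\in X$, and applying this with $y=T^{K_j}x$ gives $\frac{1}{L_N}\sum_{k\in B_j} f(T^k x)=\int_X f\,d\mu+o(1)$ uniformly in the block index $j$. Summing the blocks produces a Riemann-sum factor $(L_N/\sqrt{\log\log N})\sum_j\phi_j\to\int_a^b\phi(t)\,dt$ multiplied by $\int_X f\,d\mu$, which is the desired limit.

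The main obstacle is the bookkeeping of three intertwined scales---the Sathe--Selberg relative error, the Lipschitz fluctuation $O(L_N/\sqrt{\log\log N})$ of $\phi$ across a block, and the non-effective convergence rate in unique ergodicity---and ensuring that all of them aggregate to $o(1)$ over $\approx (b-a)\sqrt{\log\log N}/L_N$ blocks. The decisive observation that makes this work is that unique ergodicity supplies \emph{uniform} convergence in the initial point, so for any prescribed $\varepsilon>0$ there is a single threshold $L^{\ast}=L^{\ast}(\varepsilon,f,x)$ past which every block contributes error at most $\varepsilon$ regardless of where it sits; choosing $L_N\ge L^{\ast}$ eventually makes the accumulated error $O(\varepsilon)\cdot\int_a^b\phi\,dt$, and letting $\varepsilon\to 0$ closes the argument.
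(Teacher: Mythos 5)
This statement is quoted from Loyd's article and is not proved in the paper, so there is no in-paper argument to measure you against; compared with Loyd's original proof, your route is genuinely different and, as far as I can check, correct. Loyd, following Bergelson--Richter, works on the multiplicative side: one averages over dilations $n\mapsto pn$ for primes $p$ in suitable sets, exploits $\Omega(pn)=\Omega(n)+1$ together with Tur\'an--Kubilius-type concentration to show the Gaussian factor and the orbit factor are asymptotically uncorrelated, and never needs the distribution of the individual level sets of $\Omega$. You instead foliate by $\{\Omega(n)=k\}$ and import the Sathe--Selberg local limit theorem $\pi_k(N)/N\sim\phi(Z_N(k))/\sqrt{\log\log N}$ uniformly in the Gaussian window (valid there, since the window sits well below the threshold $(2-\delta)\log\log N$ where the $p=2$ singularity obstructs uniformity for $\Omega$); the decoupling then becomes literal, because the weight is deterministic and the orbit is averaged over blocks of consecutive $k$'s of length $L_N\to\infty$, where unique ergodicity in its uniform-convergence form applies regardless of where the block sits. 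Your error bookkeeping closes: the local-limit relative error is summable against the Gaussian weights, the Lipschitz fluctuation of $\phi$ across a block costs $O(L_N/\sqrt{\log\log N})$, and the ergodic error is at most $\varepsilon$ per block once $L_N\ge L^{\ast}(\varepsilon)$, all $o(1)$ after summing over the $\asymp\sqrt{\log\log N}/L_N$ blocks. The trade-off is that your argument leans on the Selberg--Delange machinery behind the local limit theorem, whereas Loyd's is elementary on the number-theoretic side and is the version that transfers to the iterated-totient and largest-prime-factor variants used elsewhere in this paper. One small simplification: the initial reduction of $F$ to indicators is unnecessary, since your level-set identity and Riemann-sum argument apply verbatim to any $F\in C_c(\R)$ with weight $F(Z_N(k))\phi(Z_N(k))$.
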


Inspired by Loyd's work in \cite{Loyd2022} and the second author's work \cite{Wang2022}, Wang et. al. \cite{WWYY2023} proved a similar result for the Erd\H{o}s-Pomerance Theorem. 
\begin{theorem}[{\cite[Theorem~1.4]{WWYY2023}}]
	\label{thm_WWYY2023_origin}
	Let $(X, \mu, T)$ be a uniquely ergodic topological dynamical system, let $\cT$ be a set of primes of natural density $\delta(\cT)$, and let $F \in C_c(\R)$. Then we have
	\begin{equation}\label{eqn_WWYY2023_origin}
			\begin{aligned}
		&\lim_{N \to \infty}\frac1N\sum_{\substack{1\le n\le N\\p_{\max}(n)\in \cT}} F\Big( \frac{\Omega(\varphi(n)) - \frac12(\log \log N)^2 }{\frac1{\sqrt3}(\log \log N)^{3/2}} \Big) f(T^{\Omega(n) }x) \\
		&\qquad =\delta(\cT)\Big(\frac{1}{\sqrt{2\pi}} \int_{-\infty}^{\infty} F(t) e^{-t^2/2} \, dt\Big)\Big( \int_X f \, d\mu \Big)
		\end{aligned}
	\end{equation}
	for all $f \in C(X)$ and $x \in X$. 
\end{theorem}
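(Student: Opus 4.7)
The plan is to establish Theorem~\ref{thm_WWYY2023_origin} by showing that the three statistics entering the sum are asymptotically uncorrelated: the indicator $\mathbf{1}_{p_{\max}(n)\in\cT}$, the normalized quantity $(\Omega(\varphi(n))-\tfrac12(\log\log N)^2)/(\tfrac1{\sqrt 3}(\log\log N)^{3/2})$ governed by the Erd\H{o}s-Pomerance Theorem~\ref{thm_EP}, and the orbit value $f(T^{\Omega(n)}x)$ governed by the Bergelson-Richter identity \eqref{eqn_BR2022thmA}. Since any $F\in C_c(\R)$ is uniformly approximable by step functions and the standard Gaussian measure is atomless, a routine density argument reduces the claim to the case $F=\mathbf{1}_{[\alpha,\beta]}$ for arbitrary $\alpha<\beta$.

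First, I would decompose the sum over $n\le N$ according to the value $k=\Omega(n)$, so that $f(T^{\Omega(n)}x)=f(T^{k}x)$ factors outside the inner sum. By the Erd\H{o}s-Kac Theorem~\ref{thm_EK}, the outer sum concentrates on $k$ with $|k-\log\log N|\le (\log\log N)^{2/3}$, say, so the contribution of all other $k$ is negligible. In this relevant range of $k$, the task reduces to proving
\[
\#\bigl\{n\le N:\Omega(n)=k,\ p_{\max}(n)\in\cT,\ \Omega(\varphi(n))\text{ in window}\bigr\}\sim\delta(\cT)\cdot\Phi([\alpha,\beta])\cdot\#\{n\le N:\Omega(n)=k\},
\]
uniformly in $k$, where $\Phi$ denotes the standard Gaussian measure.

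Second, the above refined joint equidistribution should follow by combining two inputs from the literature. The first is the result of Kural-McDonald-Sah \cite{KuralMcDonaldSah2020} implying that the event $p_{\max}(n)\in\cT$ is asymptotically independent of every residue class of $\Omega(n)$ modulo a fixed $m$, contributing the factor $\delta(\cT)$. The second is a refinement of the Erd\H{o}s-Pomerance theorem asserting that $\Omega(\varphi(n))$ remains Gaussian, with the same normalization, when conditioned both on $\Omega(n)$ taking a prescribed value near its mean and on $p_{\max}(n)\in\cT$, contributing the factor $\Phi([\alpha,\beta])$. Granting these two inputs, summing over $k$ with weights $f(T^{k}x)$ and invoking the Bergelson-Richter equidistribution \eqref{eqn_BR2022thmA} produces the factor $\int_X f\,d\mu$, completing the factorization.

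The main obstacle will be the refined Erd\H{o}s-Pomerance statement with the two side conditions, keeping the error terms uniform in the residue of $\Omega(n)$ and in the choice of $\cT$. One expects this to be handled by the Selberg-Delange method applied to the Dirichlet series $\sum n^{-s}z^{\Omega(\varphi(n))}$ weighted by $\mathbf{1}_{p_{\max}(n)\in\cT}$ and by a congruence on $\Omega(n)$, or alternatively by an anatomy-of-integers argument tracking how the largest prime factor contributes to $\Omega(\varphi(n))=\sum_{p\mid n}\Omega(p-1)$; in either case, the key point is that conditioning on $p_{\max}(n)\in\cT$ is a ``benign'' perturbation that does not destroy the Gaussian limit. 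Once this refined joint law is available, the theorem follows by term-by-term passage to the limit in the outer sum over $k$.
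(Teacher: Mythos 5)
This statement is quoted in the paper as background (it is \cite[Theorem~1.4]{WWYY2023}); the paper gives no proof of it, so your proposal can only be judged on its own merits, and it contains a genuine gap that is in fact a false intermediate claim.

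The fatal step is your ``refinement of the Erd\H{o}s--Pomerance theorem asserting that $\Omega(\varphi(n))$ remains Gaussian, with the same normalization, when conditioned on $\Omega(n)$ taking a prescribed value near its mean.'' This is not true. Writing $\Omega(\varphi(n))\approx\sum_{p\mid n}\Omega(p-1)$ and noting that $\Omega(p-1)$ is typically about $\log\log p$, the additive functions $g(n)=\sum_{p\mid n}\log\log p$ and $\omega(n)=\sum_{p\mid n}1$ have, by Tur\'an--Kubilius, variances $\sim\frac13(\log\log N)^3$ and $\sim\log\log N$ and covariance $\sum_{p\le N}(\log\log p)/p\sim\frac12(\log\log N)^2$, so the asymptotic correlation coefficient of the normalized pair $(\Omega(\varphi(n)),\Omega(n))$ is $\sqrt{3}/2$, not $0$. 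Consequently, conditioning on $\Omega(n)=k$ with $k=\log\log N+t\sqrt{\log\log N}$ shifts the mean of the normalized $\Omega(\varphi(n))$ by $\tfrac{\sqrt3}{2}t$ (a quantity of constant order, comparable to the standard deviation) and shrinks its variance to $1/4$. So the uniform-in-$k$ factorization $\#\{n\le N:\Omega(n)=k,\ p_{\max}(n)\in\cT,\ \Omega(\varphi(n))\text{ in window}\}\sim\delta(\cT)\,\Phi([\alpha,\beta])\,\#\{n\le N:\Omega(n)=k\}$ is false: the Gaussian factor must be replaced by a $k$-dependent shifted Gaussian, and your final summation over $k$ against $f(T^kx)$ then no longer produces the product $\Phi([\alpha,\beta])\int_X f\,d\mu$ without an additional decorrelation input --- which is essentially Loyd's theorem itself, making the argument circular at best.

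The deeper point is that the theorem is not a statement about independence of $\Omega(\varphi(n))$ from the \emph{value} of $\Omega(n)$; it is a statement that the orbit $f(T^{\Omega(n)}x)$, which senses $\Omega(n)$ only through $O(1)$ additive shifts, decouples from macroscopically normalized statistics. The actual proofs (Loyd, and Wang--Wei--Yan--Yi following Bergelson--Richter) exploit exactly this: multiplying $n$ by a small prime changes $\Omega(n)$ by $1$ while perturbing the normalized quantities by $o(1)$, and a Tur\'an--Kubilius/van der Corput argument over the multiplicative action then yields the factorization. Your decomposition by the exact level set $\{\Omega(n)=k\}$ also demands a local limit theorem far stronger than Erd\H{o}s--Kac, and the Selberg--Delange programme you sketch for the joint series is left entirely unexecuted. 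As written, the proposal does not constitute a proof.
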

In particular, taking $F=1$ in \eqref{eqn_WWYY2023_origin} gives the second author's result in \cite[Eq.~(7)]{Wang2022}.

\subsection{\texorpdfstring{$k$}{k}-full numbers} 
Every $k$-full number $n$ can be written uniquely as
$$n=m^k n_1^{k+1}\cdots n_{k-1}^{2k-1},$$
where $n_1,\dots, n_{k-1}$ are squarefree and pairwise coprime, see for example \cite{Ivic1978}. In particular, every squarefull number $n$  can be written uniquely as $n=m^2l^3$, where $l$ is squarefree. Let $Q_k(N)$ be the number of $k$-full positive integers not exceeding $N$.  In 1935, Erd\H{o}s and Szekeres \cite{ErdosSzekeres1934} first proved an asymptotic formula of $Q_k(N)$ as follows
\begin{equation}\label{eqn_ErdosSzekeres1934}
	Q_k(N)=c_kN^{\frac1k} +O(N^{\frac1{k+1}}),
\end{equation}
where $c_k=\prod_{p}(1+\sum_{m=k+1}^{2k-1}p^{-m/k})$. We will use \eqref{eqn_ErdosSzekeres1934} in the proof of Theorem~\ref{mainthm_kfull}. 

For the squarefull numbers, in 1958 Bateman and Grosswald \cite{BatemanGrosswald1958} showed the hitherto best unconditional result
\begin{equation}\label{eqn_BatemanGrosswald1958}
	Q_2(x)=Ax^{\frac12}+Bx^{\frac13}+O(x^{\frac16}),
\end{equation}
where $A=\zeta(3/2)/\zeta(3), B=\zeta(2/3)/\zeta(2)$. Under the Riemann Hypothesis, a number of improvements have been made on the error term in \eqref{eqn_BatemanGrosswald1958}, see \cite{Liu2016} for the records.  On other aspects of the distribution of the squarefull numbers, readers may refer to a recent work \cite{BBC2024} for arithmetic progressions in squarefull numbers and \cite{Chan2023} for the short intervals results.

\section{Proof of Theorem~\ref{mainthm_sqfree}}
\label{sec_pf_mainthm}

In this section, we prove Theorem~\ref{mainthm_sqfree}. Let $\cS\subseteq\cP$ be a finite set of primes. We denote by $w_\cS$ the indicator of the integers not divisible by any primes in $\cS$, i.e., 
\[     w_\cS(n)=\begin{cases}
 	0& \text{if } p\mid n \text{ for some } p\in \cS,\\
 	1& \text{otherwise}. 
 \end{cases}
\]
  If $\cS=\emptyset$ is an empty set, then we set $w_\cS(n)=1$ for all $n\in \N$. For $w_\cS(n)$, we have the following properties, see \cite[Lemmas~A.5-7, Corollary~A.8]{CellarosiSinai2013}.
 
 \begin{lemma}
 	We have
 	\begin{align}
 		\mu^2(n)w_\cS(n)&=\sum_{d^2|n} \mu(d)w_\cS(d)w_\cS(n/d),\label{CS13A5}\\
 		\sum_{n=1}^\infty \frac{w_\cS(n)}{n^2}&=\beta(\cS)\zeta(2),\nonumber\\
 		(\mu w_\cS)\ast w_\cS&=1_{n=1},\nonumber\\
	\sum_{n=1}^\infty \frac{\mu(n)w_\cS(n)}{n^2}&=\frac1{\beta(\cS)\zeta(2)}, \label{eqn_ws_zeta2}
 	\end{align}
 where $\beta(\cS)=\prod_{p\in \cS}\frac{p^2-1}{p^2}$. 	Moreover, we have that
\[
 		\frac1N\sum_{n=1}^N \mu^2(n)w_\cS(n) =\frac{\alpha(\cS)}{\zeta(2)}+O_\cS(N^{-\frac12}),
\]
where $\alpha(\cS)=\prod_{p\in \cS}\frac{p}{p+1}$.
\end{lemma}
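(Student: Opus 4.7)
The key structural fact I would use throughout is that $w_\cS$ is \emph{completely} multiplicative, since it is the indicator of coprimality to the fixed squarefree integer $\prod_{p\in\cS}p$; in particular $w_\cS(d)^2=w_\cS(d)\in\{0,1\}$ and $w_\cS(d)\,w_\cS(n/d)=w_\cS(n)$ for every $d\mid n$. I would record this at the outset and then run it through all four identities and the asymptotic.

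To prove \eqref{CS13A5}, I would start from the classical identity $\mu^2(n)=\sum_{d^2\mid n}\mu(d)$ and multiply by $w_\cS(n)$. Writing $n=d^2m$ when $d^2\mid n$, complete multiplicativity gives $w_\cS(n)=w_\cS(d)^2w_\cS(m)=w_\cS(d)w_\cS(m)=w_\cS(d)w_\cS(n/d)$, which turns $\mu^2(n)w_\cS(n)$ into the claimed sum. For the convolution identity $(\mu w_\cS)\ast w_\cS=1_{n=1}$, the same factorization gives
\[
\sum_{d\mid n}\mu(d)w_\cS(d)w_\cS(n/d)=w_\cS(n)\sum_{d\mid n}\mu(d)=w_\cS(n)\cdot 1_{n=1}=1_{n=1}.
\]
The two Dirichlet series identities then follow from Euler products: because $w_\cS(p^k)=0$ for $p\in\cS$ and $w_\cS(p^k)=1$ otherwise,
\[
\sum_{n=1}^\infty\frac{w_\cS(n)}{n^2}=\prod_{p\notin\cS}\frac{1}{1-p^{-2}}=\zeta(2)\prod_{p\in\cS}(1-p^{-2})=\beta(\cS)\zeta(2),
\]
and an analogous product computation (or, alternatively, Möbius inversion against the previous identity using $(\mu w_\cS)\ast w_\cS=1_{n=1}$) gives $\sum\mu(n)w_\cS(n)/n^2=1/(\beta(\cS)\zeta(2))$.

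For the asymptotic, I would substitute \eqref{CS13A5} and swap the order of summation, writing $n=d^2m$ with $d\le\sqrt{N}$ and $m\le N/d^2$. Using $w_\cS(dm)=w_\cS(d)w_\cS(m)$ and $w_\cS(d)^2=w_\cS(d)$, the sum collapses to
\[
\sum_{n\le N}\mu^2(n)w_\cS(n)=\sum_{d\le\sqrt{N}}\mu(d)w_\cS(d)\sum_{m\le N/d^2}w_\cS(m).
\]
A short inclusion-exclusion (or a direct count modulo $D:=\prod_{p\in\cS}p$) yields $\sum_{m\le x}w_\cS(m)=x\prod_{p\in\cS}(1-1/p)+O_\cS(1)$. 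Plugging this in, the main term becomes $N\prod_{p\in\cS}(1-1/p)\sum_{d\le\sqrt N}\mu(d)w_\cS(d)/d^2$, the error $O_\cS(\sqrt N)$. Completing the tail of the Dirichlet series via the trivial bound $\sum_{d>\sqrt N}1/d^2=O(1/\sqrt N)$ and invoking \eqref{eqn_ws_zeta2}, the main term equals $N\prod_{p\in\cS}(1-1/p)\cdot\frac{1}{\beta(\cS)\zeta(2)}+O_\cS(\sqrt N)$. Finally I would simplify the constant:
\[
\frac{1}{\beta(\cS)}\prod_{p\in\cS}\Bigl(1-\frac1p\Bigr)=\prod_{p\in\cS}\frac{p^2}{(p-1)(p+1)}\cdot\frac{p-1}{p}=\prod_{p\in\cS}\frac{p}{p+1}=\alpha(\cS),
\]
yielding the stated asymptotic $\alpha(\cS)/\zeta(2)+O_\cS(N^{-1/2})$ after dividing by $N$.

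The arguments are entirely elementary, and the only mildly delicate point is controlling the $\cS$-dependence of the error terms — specifically, making sure that the constant in $\sum_{m\le x}w_\cS(m)=x\prod_{p\in\cS}(1-1/p)+O_\cS(1)$ and the tail $\sum_{d>\sqrt N}w_\cS(d)/d^2$ combine to a clean $O_\cS(\sqrt N)$ overall error; once that bookkeeping is handled, all five claims fall out of the complete multiplicativity of $w_\cS$ and the Möbius identity $\mu^2(n)=\sum_{d^2\mid n}\mu(d)$.
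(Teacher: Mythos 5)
Your proof is correct. Note that the paper itself does not prove this lemma at all: it simply cites Lemmas~A.5--A.7 and Corollary~A.8 of Cellarosi--Sinai \cite{CellarosiSinai2013}, so there is no in-paper argument to compare against. Your elementary derivation is a complete and valid substitute: the observation that $w_\cS$ is completely multiplicative (being the indicator of coprimality to $P(\cS)=\prod_{p\in\cS}p$), so that $w_\cS(d)w_\cS(n/d)=w_\cS(n)$ for every $d\mid n$, correctly reduces \eqref{CS13A5} and the convolution identity to $\mu^2(n)=\sum_{d^2\mid n}\mu(d)$ and $\sum_{d\mid n}\mu(d)=1_{n=1}$; the Euler products give the two Dirichlet series values; and the final asymptotic follows from the dyadic substitution $n=d^2m$, the count $\sum_{m\le x}w_\cS(m)=x\prod_{p\in\cS}(1-1/p)+O_\cS(1)$, and the constant simplification $\beta(\cS)^{-1}\prod_{p\in\cS}(1-1/p)=\alpha(\cS)$, all of which check out. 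Since $\cS$ is finite and fixed, the $O_\cS$ bookkeeping you flag is harmless: the $O_\cS(1)$ per value of $d$ summed over $d\le\sqrt N$ gives $O_\cS(\sqrt N)$, and the tail $\sum_{d>\sqrt N}d^{-2}=O(N^{-1/2})$ contributes $O(\sqrt N)$ after multiplication by $N$, exactly as you say.
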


Now, we apply the above properties to estimate summations over squarefree numbers. 
\begin{proposition}\label{prop_sqfree}
	Let $a: \N\to\C$ be  bounded. Let $N\ge1$. Then for any $1\leq D \leq \sqrt{N}$, we have
	\begin{equation}\label{eqn_keyprop}
	\frac{1}{N}\sum_{n=1}^N \mu^2(n)w_\cS(n) a(n) = \sum_{d=1}^D \frac{\mu(d)w_\cS(d)}{d^2}\BEu{n\in [\frac{N}{d^2}]}w_\cS(n)a(d^2n)+O\Big(\frac{1}{D}\Big)+O\Big(\frac{1}{\sqrt{N}}\Big).
	\end{equation}
\end{proposition}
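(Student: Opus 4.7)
The plan is to apply the squarefree-detection identity \eqref{CS13A5} and swap the order of summation, then truncate the outer sum at $D$.

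First I would write
\[
\frac{1}{N}\sum_{n=1}^N \mu^2(n)w_\cS(n) a(n) = \frac{1}{N}\sum_{n=1}^N a(n)\sum_{d^2\mid n}\mu(d)w_\cS(d)w_\cS(n/d),
\]
and interchange the sums, substituting $n = d^2 m$. Since $d^2\mid n\le N$ forces $d\le\sqrt N$, the outer index runs over $1\le d\le \sqrt N$. Observing that $n/d = dm$, so $w_\cS(n/d) = w_\cS(dm)$, and that whenever $w_\cS(d) = 1$ the factor $w_\cS(dm)$ reduces to $w_\cS(m)$, I can collapse the inner weight to $w_\cS(m)$ (the case $w_\cS(d) = 0$ contributes nothing). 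This gives
\[
\frac{1}{N}\sum_{n=1}^N \mu^2(n)w_\cS(n) a(n) = \sum_{d\le\sqrt N}\frac{\mu(d)w_\cS(d)}{N}\sum_{m\le N/d^2}w_\cS(m)a(d^2 m).
\]

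Next I would convert the inner sum to the average notation. Writing $M_d = \lfloor N/d^2\rfloor$ and using $|M_d/N - 1/d^2|\le 1/N$ together with boundedness of $a$, each term becomes
\[
\frac{\mu(d)w_\cS(d)}{d^2}\,\BEu{m\in[N/d^2]}w_\cS(m)a(d^2 m) + O\!\left(\frac{1}{N}\right),
\]
and summing this rounding error over $d\le \sqrt N$ contributes at most $O(\sqrt N/N) = O(N^{-1/2})$, which matches the second error term in the statement.

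Finally, to truncate at $d = D$, I would estimate the tail $D < d \le \sqrt N$ trivially: since $\|a\|_\infty$ and $w_\cS$ are bounded, each summand is $O(1/d^2)$, and
\[
\sum_{D<d\le\sqrt N}\frac{1}{d^2} = O\!\left(\frac{1}{D}\right).
\]
Combining this with the previous display yields \eqref{eqn_keyprop}. I do not expect a serious obstacle here; the only delicate point is the bookkeeping that lets $w_\cS(dm)$ collapse to $w_\cS(m)$ after using $w_\cS(d)^2 = w_\cS(d)$, and the routine floor-function estimate that turns $\frac{1}{N}\sum_{m\le N/d^2}$ into $\frac{1}{d^2}\BEu{m\in[N/d^2]}$ with an $O(1/N)$ per-term loss.
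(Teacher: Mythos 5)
Your proposal is correct and follows essentially the same route as the paper: apply the identity \eqref{CS13A5}, substitute $n=d^2m$ and use $w_\cS(d)w_\cS(dm)=w_\cS(d)w_\cS(m)$, convert the inner sum to an average with an $O(1/N)$ per-term rounding loss (totalling $O(N^{-1/2})$ over $d\le\sqrt N$), and truncate the outer sum at $D$ using $\sum_{d>D}d^{-2}\le 1/D$. No substantive differences.
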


\begin{proof} Without loss of generality, we may assume that $|a(n)|\leq1, \forall n\in \N$. Then by \eqref{CS13A5}
\begin{align}
	&\quad\frac{1}{N}\sum_{n=1}^N \mu^2(n)w_\cS(n) a(n) \nonumber\\
 &= \frac{1}{N}\sum_{n=1}^N \sum_{d^2|n}\mu(d)w_\cS(d)w_\cS(n/d) a(n) \nonumber\\
	&=\frac{1}{N}\sum_{1\leq d^2m \leq N}  \mu(d)w_\cS(d)w_\cS(dm) a(d^2m) \nonumber\\
	&=\frac{1}{N} \sum_{d=1}^{\sqrt{N}} \sum_{m=1}^{\frac{N}{d^2}}  \mu(d)w_\cS(d)w_\cS(m) a(d^2m) \nonumber\\
	&=\sum_{d=1}^{\sqrt{N}}  \mu(d)w_\cS(d)  \left(\frac{1}{N} \sum_{m=1}^{\frac{N}{d^2}}  w_\cS(m) a(d^2m)\right) \nonumber\\
	&=\sum_{d=1}^{\sqrt{N}} \mu(d) w_\cS(d) \left(\frac1{d^2}\BEu{n\in [\frac{N}{d^2}]}w_\cS(n) a(d^2n)+O\Big(\frac{1}{N}\Big)\right)\nonumber\\
	&=\sum_{d=1}^{\sqrt{N}} \frac{\mu(d)w_\cS(d) }{d^2}\BEu{n\in [\frac{N}{d^2}]}w_\cS(n)a(d^2n)+O\Big(\frac{1}{\sqrt{N}}\Big)\nonumber\\
	&=\Big(\sum_{d=1}^{D} + \sum_{d=D+1}^{\sqrt{N}}\Big)\frac{\mu(d)w_\cS(d) }{d^2}\BEu{n\in [\frac{N}{d^2}]}w_\cS(n)a(d^2n) +O\Big(\frac{1}{\sqrt{N}}\Big).\nonumber
\end{align}
For any $1\leq D \leq \sqrt{N}$, by $\sum_{d=D+1}^\infty \frac1{d^2}\leq \frac1D$ and $\abs{\BEu{n\in[\frac{N}{d^2}]} w_\cS(n)a(d^2n)}\leq1$, we get that 
\begin{align}
  \abs{\sum_{d=D+1}^{\sqrt{N}} \frac{\mu(d)w_\cS(d) }{d^2}\BEu{n\in [\frac{N}{d^2}]}w_\cS(n)a(d^2n)}&\leq  \sum_{d=D+1}^\infty \frac{1}{d^2}\abs{\BEu{n\in [\frac{N}{d^2}]}w_\cS(n)a(d^2n)}\nonumber\\
  &\leq \sum_{d=D+1}^\infty \frac1{d^2}\leq \frac1D.  \nonumber
\end{align}
Thus, \eqref{eqn_keyprop} follows.
\end{proof}

\noindent\textit{Proof of Theorem~\ref{mainthm_sqfree}}. Suppose $a: \N\to\C$ is a bounded arithmetic function of invariant average $A$ under multiplications, we first show that 
\begin{equation}\label{eqn_keythm1}
	\quad\lim_{N\to\infty}\frac1N\sum_{n=1}^N w_\cS(n)a(n)= \alpha_0(\cS)  A,
\end{equation}
where $\alpha_0(\cS)=\prod_{p\in\cS}\frac{p-1}{p}$.

Put $P(\cS)= \prod_{p\in \cS}p$. Then $w_\cS(n)=1_{(n,P(\cS))=1}=\sum_{d|(n,P(\cS))}\mu(d)$. It follows that
\begin{align}
	\frac1N\sum_{n=1}^N w_\cS(n)a(n)&= \frac1N\sum_{n=1}^N \sum_{d|(n,P(\cS))} \mu(d) a(n) \nonumber\\
	&=\frac1N \sum_{d|P(\cS)} \mu(d) \sum_{n=1}^{\frac{N}{d}}a(dn) \nonumber\\
	&=\sum_{d|P(\cS)} \frac{\mu(d)}{d}\left(\frac{1}{\frac{N}d}\sum_{n=1}^{\frac{N}{d}}a(dn)\right).\nonumber
\end{align}
Since the average of $a(n)$ is invariant under left multiplications, we get that
\begin{equation*}
    \lim_{N\to\infty}\frac1N\sum_{n=1}^N w_\cS(n)a(n)=\sum_{d|P(\cS)} \frac{\mu(d)}{d}\cdot \lim_{N\to\infty} \frac{1}{\frac{N}d} \sum_{n=1}^{\frac{N}{d}}  a(dn) =\sum_{d|P(\cS)} \frac{\mu(d)}{d}\cdot A= \alpha_0(\cS)A,
\end{equation*}
which gives \eqref{eqn_keythm1} as claimed. 

Observe that, for any $m\in \N$, the average of $a(mn)$ over $n\in \N$ is also invariant under  multiplications. In particular, by \eqref{eqn_keythm1}, we get that
\begin{equation*}
	\lim_{N\to\infty}\BEu{n\in [N]} w_\cS(n)a(d^2n)= \alpha_0(\cS)A
\end{equation*}
for any integer $d\ge1$. It follows by taking $N\to\infty$ on both sides of \eqref{eqn_keyprop} that
\begin{equation}
		\lim_{N\to\infty}\frac{1}{N}\sum_{n=1}^N \mu^2(n)w_\cS(n) a(n) = \sum_{d=1}^{D} \frac{\mu(d)w_\cS(d)}{d^2} \cdot \alpha_0(\cS)A+O\Big(\frac{1}{D}\Big).
\end{equation}
Taking $D\to \infty$, we obtain that 
\begin{equation*}
    \lim_{N\to\infty}\frac{1}{N}\sum_{n=1}^N \mu^2(n)w_\cS(n) a(n) = \sum_{d=1}^\infty \frac{\mu(d)w_\cS(d)}{d^2} \cdot \alpha_0(\cS)  A,
\end{equation*}
which is equal to $\frac{\alpha(\cS)}{\zeta(2)}  A$ by \eqref{eqn_ws_zeta2}.\qed

\section{Proof of Theorem~\ref{mainthm_sqfree_applications}}
\label{sec_pf_maincor}

In this section, we prove Theorem~\ref{mainthm_sqfree_applications}. The item (2) in Theorem~\ref{mainthm_sqfree_applications} follows immediately by applying \cite[Theorem~B and Corollary~1.16 ]{BergelsonRichter2022} to Theorem~\ref{mainthm_sqfree}. Hence it suffices to show item (1). 

\noindent\textit{Proof of Theorem~\ref{mainthm_sqfree_applications} (1)}. First, for $\Omega(\varphi_k(n))$, by Theorem~\ref{thm_BKW} we have the following Erd\H{o}s-Kac type theorem established by Bassily, K\'atai, and Wijsmuller:	for any $F\in C_c(\R)$, we have 
	\begin{equation*}
	\lim_{N \to \infty} \frac1N\sum_{n=1}^{N} F\Big( \frac{\Omega(\varphi_k(n)) - a_k(\log \log N)^{k+1} }{b_k(\log \log N)^{k+1/2}} \Big)
	=
	\frac{1}{\sqrt{2\pi}} \int_{-\infty}^{\infty} F(t) e^{-t^2/2} \, dt.
\end{equation*}

Secondly, similar to the proof of \cite[Theorem~1.4]{WWYY2023}, applying Bergelson and Richter's  techniques in \cite{BergelsonRichter2022},  we can obtain the following Loyd type theorem: 
\begin{align}
	&\lim_{N\to\infty}\frac1N\sum_{\substack{1\leq n \leq N\\ p_{\max}(n)\in \cT}} F\Big(\frac{\Omega(\varphi_k(n)) - a_k(\log \log N)^{k+1} }{b_k(\log \log N)^{k+1/2}} \Big)  f(T^{\Omega(n) }x) \nonumber\\
	&	= \delta(\cT)\Big(\frac{1}{\sqrt{2\pi}} \int_{-\infty}^{\infty} F(t) e^{-t^2/2} \, dt\Big)\Big(\int_X f \, d\mu \Big) \label{eqn_maincor_bkw_br}
\end{align}
holds for any $f\in C(X)$ and $x\in X$.

Thirdly, putting 
$a(n)=1_{p_{\max}(n)\in \cT} F\Big(\frac{\Omega(\varphi_k(n)) - a_k(\log \log N)^{k+1} }{b_k(\log \log N)^{k+1/2}} \Big)  f(T^{\Omega(n) }x)$ with $a_k$ and $b_k$ defined as in \eqref{akbk}, we can show that 
\begin{equation*}
	\lim_{N\to\infty}\frac1N\sum_{n=1}^N a(mn)= \lim_{N\to\infty}\frac1N\sum_{n=1}^N a(n)
\end{equation*}
for any $m\in \N$. Indeed, by \cite[Lemma~4.5]{Wang2022}, we have that 
\begin{equation}\label{eqn_maincor_pf_pmax}
	\BEu{n\in[N]} a(mn)= \BEu{n\in[N]} 1_{p_{\max}(n)\in \cT} F\Big(\frac{\Omega(\varphi_k(mn)) - a_k(\log \log N)^{k+1} }{b_k(\log \log N)^{k+1/2}} \Big)  f(T^{\Omega(mn) }x)+O(N^{-c})
\end{equation}
for some constant $c>0$ depending only on $m$. Now, for $\Omega(\varphi_k(mn))$ we have the following lemma.

\begin{lemma}
\label{lem_keylemma} 
Let $k\ge0$ and $m\ge1$. Then we have
\begin{equation*}
    \Omega(\varphi_k(mn))=\Omega(\varphi_k(n))+B
\end{equation*}
with $B=O_{k,m}(1)$ for all $n\ge1$. 
\end{lemma}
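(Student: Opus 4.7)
The plan is to prove the lemma by induction on $k \ge 0$. The base case $k=0$ is immediate, since $\varphi_0(mn) = mn$ gives $\Omega(\varphi_0(mn)) = \Omega(n) + \Omega(m)$, and $\Omega(m) = O_m(1)$.

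For the inductive step I would exploit the classical identity
\[
\varphi(mn)\,\varphi(\gcd(m,n)) \;=\; \varphi(m)\,\varphi(n)\,\gcd(m,n),
\]
together with the standard fact that $d \mid m$ implies $\varphi(d) \mid \varphi(m)$. These two facts show that $\varphi(mn) = r_{m,n}\cdot\varphi(n)$ where
\[
r_{m,n} \;:=\; \frac{\varphi(m)\,\gcd(m,n)}{\varphi(\gcd(m,n))}
\]
is a positive integer. As $n$ ranges over $\N$, the value $\gcd(m,n)$ runs only through the finitely many divisors of $m$, so $r_{m,n}$ takes only finitely many values, each bounded solely in terms of $m$. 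In particular $r_{m,n} = O_m(1)$, and the dependence of $r_{m,n}$ on $n$ is entirely absorbed through $\gcd(m,n)$.

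Given the factorization $\varphi(mn) = r_{m,n}\varphi(n)$, applying $\varphi_{k-1}$ to both sides yields $\varphi_k(mn) = \varphi_{k-1}(r_{m,n}\varphi(n))$. Invoking the inductive hypothesis at level $k-1$ with the pair $(r_{m,n},\,\varphi(n))$ in place of $(m,n)$ gives
\[
\Omega(\varphi_k(mn)) - \Omega(\varphi_k(n)) \;=\; \Omega\bigl(\varphi_{k-1}(r_{m,n}\varphi(n))\bigr) - \Omega\bigl(\varphi_{k-1}(\varphi(n))\bigr) \;=\; O_{k-1,\,r_{m,n}}(1),
\]
and since $r_{m,n}$ ranges over a finite set depending only on $m$, one may take the maximum of the implicit constants over that set to conclude the bound is $O_{k,m}(1)$. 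This closes the induction.

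The main subtlety, and really the only point requiring care, is uniformity in $n$: the inductive hypothesis at level $k-1$ must be applied with a multiplier that remains bounded as $n$ varies, which is why the identification of $r_{m,n}$ as a quantity depending on $n$ only through $\gcd(m,n)$ is essential. Everything else (verifying the totient identity, the divisibility $\varphi(d)\mid\varphi(m)$, and the integrality of $r_{m,n}$) is routine prime-by-prime bookkeeping and does not interact with the asymptotic parameter $n$.
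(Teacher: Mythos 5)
Your proof is correct and follows essentially the same route as the paper: both rest on the identity $\varphi(mn)\,\varphi(\gcd(m,n))=\varphi(m)\,\varphi(n)\,\gcd(m,n)$ together with the fact that $d\mid m$ implies $\varphi(d)\mid\varphi(m)$, and both induct on $k$. The only difference is bookkeeping: the paper carries through the induction the explicit divisibility $\varphi_k(mn)/\varphi_k(n)\mid\tilde\varphi_k(m)$ with $\tilde\varphi(m)=\varphi(m)m$, yielding the concrete bound $B\le\Omega(\tilde\varphi_k(m))$, whereas you induct on the $\Omega$-statement directly and control the varying multiplier by observing that it takes only finitely many values, one for each divisor of $m$.
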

\begin{proof}
If $k=0$, then $\varphi_0(n)=n$ and $\Omega(mn)=\Omega(n)+\Omega(m)$, one may take $B=\Omega(m)=O_m(1)$. Suppose $k\ge1$.
For Euler's totient function $\varphi$, we have the following identity
\[
\varphi(mn)=\varphi(n)\cdot\frac{\varphi(m)\gcd(m,n)}{\varphi(\gcd(m,n))}.
\]
It follows that $\varphi(mn)=\varphi(n)B_1$ with $B_1\mid\varphi(m)m$. Let $\tilde\varphi(m)=\varphi(m)m$. One may easily check that the following property holds: $\tilde\varphi(a) \mid  \tilde\varphi(b) $ for all $a \mid b$. Then by induction on $k$, we may write $\varphi_k(mn)=\varphi_k(n)B_k$ with $B_k\mid\tilde\varphi_k(m)$, where $\tilde\varphi_k$ is the $k$-fold iterate of $\tilde\varphi$. It follows that 
$\Omega(\varphi_k(mn))=\Omega(\varphi_k(n))+B$ with $B=\Omega(B_k)\le\Omega(\tilde\varphi_k(m))$. This completes the proof of the lemma.
\end{proof}

Since $\Omega(\varphi_k(mn))=\Omega(\varphi_k(n))+O_{k,m}(1)$, for $n\leq N$,
\[
F\Big(\frac{\Omega(\varphi_k(mn)) - a_k(\log \log N)^{k+1} }{b_k(\log \log N)^{k+1/2}} \Big)- F\Big(\frac{\Omega(\varphi_k(n)) - a_k(\log \log N)^{k+1} }{b_k(\log \log N)^{k+1/2}} \Big)=o_{N\to\infty}(1)
\]
due to $F\in C_c(\R)$ is uniformly continuous. Thus, by \eqref{eqn_maincor_pf_pmax} we have that 
\begin{equation*}
	\BEu{n\in[N]} a(mn)= \BEu{n\in[N]} 1_{p_{\max}(n)\in \cT} F\Big(\frac{\Omega(\varphi_k(n)) - a_k(\log \log N)^{k+1} }{b_k(\log \log N)^{k+1/2}} \Big)  f(T^{\Omega(mn) }x)+o_{N\to\infty}(1).
\end{equation*}
By \eqref{eqn_maincor_bkw_br}, we get that 
\begin{multline*}
        \lim_{N\to\infty} \BEu{n\in[N]} 1_{p_{\max}(n)\in \cT} F\Big(\frac{\Omega(\varphi_k(n)) - a_k(\log \log N)^{k+1} }{b_k(\log \log N)^{k+1/2}} \Big)  f(T^{\Omega(n) }\circ T^{\Omega(m)}x)\\
   = \lim_{N\to\infty} \BEu{n\in[N]} a(n),
\end{multline*}
which implies that
\begin{equation*}
    \lim_{N\to \infty}\BEu{n\in[N]} a(mn)=\lim_{N\to\infty} \BEu{n\in[N]} a(n).
\end{equation*}

Therefore, $a(n)$ is of invariant average under multiplications. Hence \eqref{eqn_maincor_bkw} follows by Theorem~\ref{mainthm_sqfree}.\qed

\section{Proof of Theorem~\ref{mainthm_kfull}}
\label{sec_mainthm_proof}

To prove Theorem~\ref{mainthm_kfull}, we first prove the following proposition. It gives a relation between the average over $k$-full numbers and the standard average.

\begin{proposition}
\label{mainprop}
Let $k\ge2$. Let $a: \N\to\C$ be  bounded. Then for any $1\leq D_1\leq N^{\frac1{(k-1)(k+1)}}, \dots, 1\leq D_{k-1}\leq N^{\frac1{(k-1)(2k-1)}}$, we have
\begin{multline}\label{eqn_mainprop}
	\frac1{N^{\frac1k}}\sum_{\substack{1\leq n \leq N\\ n \,\text{is} \, k\text{-full}}} a(n) = \sum_{\substack{n_1\leq D_1,\dots, n_{k-1}\leq D_{k-1}\\ (n_i, n_j)=1, \,\forall i<j}}  \frac{\mu^2(n_1)\cdots\mu^2(n_{k-1}) }{n_1^{1+\frac1k}\cdots n_{k-1}^{1+\frac{k-1}k}}  \cdot \\ \BEu{m\in [V_k] } a(m^k n_1^{k+1}\cdots n_{k-1}^{2k-1})
		+ O\of{D_1^{-\frac1{k}}}+\cdots + O\of{ D_{k-1}^{-\frac{k-1}k}} + O\big(N^{-\frac1{k(k+1)}}\big), 
\end{multline}
where 
\[
V_k=\frac{N^{\frac1k}}{n_1^{1+\frac1k}\cdots n_{k-1}^{1+\frac{k-1}k}},
\]
and the implied constants depend on $k$ and $\sup_n|a(n)|$ only.
\end{proposition}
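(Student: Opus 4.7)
The plan is to use the unique factorization of $k$-full numbers recalled in Section~\ref{sec_pre}: every $k$-full integer $n$ can be written uniquely as $n=m^k n_1^{k+1}\cdots n_{k-1}^{2k-1}$ with $n_1,\dots,n_{k-1}$ squarefree and pairwise coprime and $m$ ranging over $\N$. Under this bijection the constraint $n\le N$ is equivalent to $m\le V_k$, so after swapping the order of summation the target sum becomes
\[
  \frac{1}{N^{1/k}}\sum_{\substack{n_1,\dots,n_{k-1}\\ \text{sqf, pw coprime}}}\mu^2(n_1)\cdots\mu^2(n_{k-1})\sum_{m\le V_k}a\bigl(m^k n_1^{k+1}\cdots n_{k-1}^{2k-1}\bigr).
\]
By the definition of $\BEu{m\in[V_k]}$, the inner $m$-sum equals $\lfloor V_k\rfloor\,\BEu{m\in[V_k]}a(m^kn_1^{k+1}\cdots n_{k-1}^{2k-1})$, which I would split as $V_k\,\BEu{m\in[V_k]}a(\cdot)-\{V_k\}\,\BEu{m\in[V_k]}a(\cdot)$. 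Dividing by $N^{1/k}$ turns $V_k/N^{1/k}$ into the weight $1/(n_1^{1+1/k}\cdots n_{k-1}^{1+(k-1)/k})$ that appears in the proposition, while the fractional-part contribution is kept as a rounding remainder to be absorbed into the error.

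Next I would truncate the outer sum to $n_i\le D_i$ for each $i$. Splitting the tail into $k-1$ pieces according to which coordinate exceeds $D_i$, each piece can be bounded by
\[
  \sup_n|a(n)|\cdot\Bigl(\sum_{n>D_i}n^{-1-i/k}\Bigr)\prod_{j\ne i}\Bigl(\sum_{n_j\ge 1}n_j^{-1-j/k}\Bigr)=O\bigl(D_i^{-i/k}\bigr),
\]
using absolute convergence in the remaining coordinates; this produces the stated truncation errors. The hypotheses on the $D_i$'s (understood as $D_i\le N^{1/((k-1)(k+i))}$ for all intermediate $i$ as well) guarantee $D_1^{k+1}\cdots D_{k-1}^{2k-1}\le N$, so every tuple retained in the truncated main sum satisfies $V_k\ge 1$ and the averages $\BEu{m\in[V_k]}$ are well-defined.

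The main obstacle is controlling the total rounding error $N^{-1/k}\sum_{V_k\ge 1}\{V_k\}\,\BEu{m\in[V_k]}a(\cdot)$, which is at most $O(1)\cdot N^{-1/k}\cdot\#\{(n_1,\dots,n_{k-1})\text{ sqf, pw coprime}:n_1^{k+1}\cdots n_{k-1}^{2k-1}\le N\}$. The key observation is that each such tuple determines a $(k+1)$-full integer $\le N$ (its $p$-adic valuations lie in $\{k+1,\dots,2k-1\}$), so the count is at most $Q_{k+1}(N)=O(N^{1/(k+1)})$ by \eqref{eqn_ErdosSzekeres1934} applied with $k+1$ in place of $k$. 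This yields the remaining error $O(N^{1/(k+1)-1/k})=O(N^{-1/(k(k+1))})$ and completes the argument.
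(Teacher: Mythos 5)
Your argument is correct and follows essentially the same route as the paper: the unique factorization of $k$-full numbers, the same split of the inner $m$-sum into a main term $V_k\,\BEu{m\in[V_k]}a(\cdot)$ plus a fractional-part remainder, and the same coordinate-by-coordinate truncation giving the errors $O(D_i^{-i/k})$. The only cosmetic difference is in the rounding error: you bound the number of admissible tuples with $n_1^{k+1}\cdots n_{k-1}^{2k-1}\le N$ by injecting them into the $(k+1)$-full integers and citing \eqref{eqn_ErdosSzekeres1934}, whereas the paper gets the same $O(N^{1/(k+1)})$ count by a direct elementary summation; both yield $O(N^{-1/(k(k+1))})$.
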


\begin{proof}
	For any $k$-full number $n$, we write it as
$$n=m^k n_1^{k+1}\cdots n_{k-1}^{2k-1},$$
where $n_1,\dots, n_{k-1}$ are squarefree and pairwise coprime. This expression is unique. So we can rewrite the summation over $k$-full numbers as $k$ summations over $m, n_1,\dots, n_{k-1}$:
\begin{align}
	&\qquad\frac1{N^{\frac1k}}\sum_{\substack{1\leq n \leq N\\ n \,\text{is} \, k\text{-full}}} a(n) \nonumber\\
	& =  \frac1{N^{\frac1k}}\sum_{\substack{m^k n_1^{k+1}\cdots n_{k-1}^{2k-1} \leq N\\  \mu^2(n_1)=\cdots=\mu^2(n_{k-1})=1\\ (n_i, n_j)=1, \,\forall i<j}} a(m^k n_1^{k+1}\cdots n_{k-1}^{2k-1})  \nonumber\\
	&= \frac1{N^{\frac1k}}\sum_{\substack{n_1^{k+1}\cdots n_{k-1}^{2k-1} \leq N\\  \mu^2(n_1)=\cdots=\mu^2(n_{k-1})=1\\ (n_i, n_j)=1, \,\forall i<j}} \sum_{m\leq V_k } a(m^k n_1^{k+1}\cdots n_{k-1}^{2k-1}) \nonumber\\
	&= \sum_{\substack{n_1^{k+1}\cdots n_{k-1}^{2k-1} \leq N\\ (n_i, n_j)=1, \,\forall i<j}}  \frac{\mu^2(n_1)\cdots\mu^2(n_{k-1}) }{n_1^{1+\frac1k}\cdots n_{k-1}^{1+\frac{k-1}k}}  \cdot \frac1{V_k}\sum_{m\leq V_k } a(m^k n_1^{k+1}\cdots n_{k-1}^{2k-1}) \nonumber\\
	&= \sum_{\substack{n_1^{k+1}\cdots n_{k-1}^{2k-1} \leq N\\ (n_i, n_j)=1, \,\forall i<j}}  \frac{\mu^2(n_1)\cdots\mu^2(n_{k-1}) }{n_1^{1+\frac1k}\cdots n_{k-1}^{1+\frac{k-1}k}}  \cdot \BEu{m\in [V_k] } a(m^k n_1^{k+1}\cdots n_{k-1}^{2k-1})\nonumber\\
	& - \sum_{\substack{n_1^{k+1}\cdots n_{k-1}^{2k-1} \leq N\\ (n_i, n_j)=1, \,\forall i<j}}  \frac{\mu^2(n_1)\cdots\mu^2(n_{k-1}) }{n_1^{1+\frac1k}\cdots n_{k-1}^{1+\frac{k-1}k}}  \cdot \frac{\{V_k\}}{V_k}\BEu{m\in [V_k] } a(m^k n_1^{k+1}\cdots n_{k-1}^{2k-1}) \nonumber\\
	&\colonequals S_1-S_2, \label{eqn_mainthm_pf}
\end{align}
where $\{V_k\}$ denotes the fractional part of $V_k$.

Notice that
$$\BEu{m\in [V_k]} a(m^k n_1^{k+1}\cdots n_{k-1}^{2k-1})=O(1). $$
For $S_1$,  we divide it into two parts according to $n_{k-1}\leq D_{k-1}$ and $n_{k-1}>D_{k-1}$:
\begin{align}
		S_1&= \sum_{\substack{n_1^{k+1}\cdots n_{k-1}^{2k-1} \leq N, n_{k-1}\leq D_{k-1}\\ (n_i, n_j)=1, \,\forall i<j}}  \frac{\mu^2(n_1)\cdots\mu^2(n_{k-1}) }{n_1^{1+\frac1k}\cdots n_{k-1}^{1+\frac{k-1}k}}  \cdot \BEu{m\in [V_k]} a(m^k n_1^{k+1}\cdots n_{k-1}^{2k-1})\nonumber\\
		&\quad+ \sum_{\substack{n_1^{k+1}\cdots n_{k-1}^{2k-1} \leq N, n_{k-1}> D_{k-1}\\ (n_i, n_j)=1, \,\forall i<j}}  \frac{\mu^2(n_1)\cdots\mu^2(n_{k-1}) }{n_1^{1+\frac1k}\cdots n_{k-1}^{1+\frac{k-1}k}}  \cdot \BEu{m\in [V_k]} a(m^k n_1^{k+1}\cdots n_{k-1}^{2k-1}). \label{eqn_S1_1}
\end{align}

Since the series over $n_1, \dots, n_{k-2}$ is absolutely convergent, the second term in \eqref{eqn_S1_1} is bounded by
$$\sum_{n_{k-1}>D_{k-1}} \frac1{n_{k-1}^{1+\frac{k-1}k}}\ll D_{k-1}^{-\frac{k-1}k}.$$
This implies that
\begin{multline*}
	S_1= \sum_{\substack{n_1^{k+1}\cdots n_{k-1}^{2k-1} \leq N, n_{k-1}\leq D_{k-1}\\ (n_i, n_j)=1, \,\forall i<j}}  \frac{\mu^2(n_1)\cdots\mu^2(n_{k-1}) }{n_1^{1+\frac1k}\cdots n_{k-1}^{1+\frac{k-1}k}}  \cdot \BEu{m\in [V_k]} a(m^k n_1^{k+1}\cdots n_{k-1}^{2k-1}) \\ + O\of{ D_{k-1}^{-\frac{k-1}k}}.
\end{multline*}

By induction, we obtain an estimation for $S_1$ as follows
\begin{multline}\label{eqn_mainthm_pf_S1}
		S_1=\sum_{\substack{n_1\leq D_1,\dots, n_{k-1}\leq D_{k-1}\\ (n_i, n_j)=1, \,\forall i<j}}  \frac{\mu^2(n_1)\cdots\mu^2(n_{k-1}) }{n_1^{1+\frac1k}\cdots n_{k-1}^{1+\frac{k-1}k}}  \cdot \BEu{m\in [V_k]} a(m^k n_1^{k+1}\cdots n_{k-1}^{2k-1})\\
		+ O\of{D_1^{-\frac1{k}}}+\cdots + O\of{ D_{k-1}^{-\frac{k-1}k}}.
\end{multline}

As regards  $S_2$, we have
\[
S_2=O\Big(\frac1{N^{\frac{1}{k}}} \sum_{n_1^{k+1}\cdots n_{k-1}^{2k-1} \leq N}1\Big),
\]
and the inner summation in above $O$-term is bounded by $N^{\frac{1}{k+1}}$. In fact,
\begin{align}
	\sum_{n_1^{k+1}\cdots n_{k-1}^{2k-1} \leq N}1 &=\sum_{n_2^{k+2}\cdots n_{k-1}^{2k-1} \leq N}\sum_{n_1\leq\frac{N^{\frac1{k+1}}}{n_2^{1+\frac1{k+1}}\cdots n_{k-1}^{1+\frac{k-2}{k+1}}}}1 \nonumber\\
	&\leq \sum_{n_2^{k+2}\cdots n_{k-1}^{2k-1} \leq N}  \frac{N^{\frac1{k+1}}}{n_2^{1+\frac1{k+1}}\cdots n_{k-1}^{1+\frac{k-2}{k+1}}} \nonumber\\
	&\leq N^{\frac1{k+1}}\sum_{n_2=1}^\infty\cdots\sum_{n_{k-1}=1}^\infty \frac{1}{n_2^{1+\frac1{k+1}}\cdots n_{k-1}^{1+\frac{k-2}{k+1}}} \nonumber\\
	&\ll N^{\frac1{k+1}}. \label{eqn_mainprop_pf_S2-1}
\end{align}
We remark that Vogts \cite{Vogts1985} gave an asymptotic formula for the counting function on numbers of the general form $n_1^{l_1}\cdots n_{r}^{l_r}$, from which \eqref{eqn_mainprop_pf_S2-1} also follows, where $1\leq l_1\leq \cdots\leq l_r$ are real numbers. Plugging \eqref{eqn_mainprop_pf_S2-1} into $S_2$, we get
\begin{equation}\label{eqn_mainthm_pf_S2}
	S_2=O\big(N^{-\frac1{k(k+1)}}\big).
\end{equation}

Thus, \eqref{eqn_mainprop} follows immediately by combining  \eqref{eqn_mainthm_pf}, \eqref{eqn_mainthm_pf_S1} and  \eqref{eqn_mainthm_pf_S2} together.
\end{proof}

\begin{proof}[Proof of Theorem~\ref{mainthm_kfull}] To prove Theorem~\ref{mainthm_kfull}, it is equivalent to show that
\begin{equation}\label{eqn_mainthm_equiv}
	\lim_{N\to\infty}\frac1{N^{\frac1k}}\sum_{\substack{1\leq n \leq N\\ n \,\text{is} \, k\text{-full}}} a(n)=  c_k \lim_{N\to\infty}\BEu{n\in  [N] } a(n^k),
\end{equation}
where $c_k=\prod_{p}(1+\sum_{m=k+1}^{2k-1}p^{-m/k})$ as in \eqref{eqn_ErdosSzekeres1934}.

By Proposition~\ref{mainprop}, \eqref{eqn_mainprop} holds for any $1\leq D_1\leq N^{\frac1{(k-1)(k+1)}}, \dots, 1\leq D_{k-1}\leq N^{\frac1{(k-1)(2k-1)}}$. Fixing $D_1,\dots, D_{k-1}$, we take $N\to\infty$ in \eqref{eqn_mainprop} first. Since the sequence $a(n)$ is of $k$-th power invariant average under multiplication, we have
\begin{equation*}
	\lim_{N\to\infty}\BEu{m\in [V_k] } a(m^k n_1^{k+1}\cdots n_{k-1}^{2k-1}) =  \lim_{N\to\infty}\BEu{m\in [N] } a(m^k).
\end{equation*}
This implies that
\begin{multline}\label{eqn_mainthm_pf_keyidentity2}
	\lim_{N\to\infty} \frac1{N^{\frac1k}}\sum_{\substack{1\leq n \leq N\\ n \,\text{is} \, k\text{-full}}} a(n)= \sum_{\substack{n_1\leq D_1,\dots, n_{k-1}\leq D_{k-1}\\ (n_i, n_j)=1, \,\forall i<j}}  \frac{\mu^2(n_1)\cdots\mu^2(n_{k-1}) }{n_1^{1+\frac1k}\cdots n_{k-1}^{1+\frac{k-1}k}} \cdot   \lim_{N\to\infty}\BEu{m\in [N] } a(m^k)\\
	 + O\of{D_1^{-\frac1{k}}}+\cdots + O\of{ D_{k-1}^{-\frac{k-1}k}}.
\end{multline}

Now, taking $D_1, \dots, D_{k-1} \to\infty$ in \eqref{eqn_mainthm_pf_keyidentity2}, we get that
\begin{equation}\label{eqn_mainthm_pf_keyidentity3}
	\lim_{N\to\infty} \frac1{N^{\frac1k}}\sum_{\substack{1\leq n \leq N\\ n \,\text{is} \, k\text{-full}}} a(n)=\mathop{\sum_{n_1=1}^\infty\cdots\sum_{n_{k-1}=1}^\infty}_{(n_i, n_j)=1, \,\forall   i<j}  \frac{\mu^2(n_1)\cdots\mu^2(n_{k-1}) }{n_1^{1+\frac1k}\cdots n_{k-1}^{1+\frac{k-1}k}} \cdot   \lim_{N\to\infty}\BEu{m\in [N] } a(m^k).
\end{equation}

By \eqref{eqn_ErdosSzekeres1934}, taking $a(n)=1$ for all $n$ in \eqref{eqn_mainthm_pf_keyidentity3} gives
\begin{equation}\label{eqn_mainthm_pf_ck}
	\mathop{\sum_{n_1=1}^\infty\cdots\sum_{n_{k-1}=1}^\infty}_{(n_i, n_j)=1, \,\forall   i<j}  \frac{\mu^2(n_1)\cdots\mu^2(n_{k-1}) }{n_1^{1+\frac1k}\cdots n_{k-1}^{1+\frac{k-1}k}}= \lim_{N\to\infty} \frac1{N^{\frac1k}}\sum_{\substack{1\leq n \leq N\\ n \,\text{is} \, k\text{-full}}}1=c_k.
\end{equation}
Therefore, \eqref{eqn_mainthm_equiv} follows by \eqref{eqn_mainthm_pf_keyidentity3}  and \eqref{eqn_mainthm_pf_ck}. This completes the proof of Theorem~\ref{mainthm_kfull}.
\end{proof}

\section{Proof of Theorem~\ref{mainthm_kfull_applications}}
\label{sec_proofs}

In this section, we apply Theorem~\ref{mainthm_kfull} to prove Theorem~\ref{mainthm_kfull_applications}. Let $k\ge2$. By Theorem~\ref{mainthm_kfull}, if $a: \N\to\C$ is a bounded function satisfying \eqref{eqn_k_invariant_cond_1} and \eqref{eqn_k_invariant_cond_2}, then 
\begin{equation}\label{eqn_proofs_keyidentity}
\lim_{N\to\infty} \BEu{\substack{ n \in [N]\\ n \,\text{is} \, k\text{-full}}} a(n)= \lim_{N\to\infty}\BEu{n\in  [N] } a(n^k).
\end{equation}
We will show that the arithmetic functions in the summations of \eqref{eqn_BR2022thmA}, \eqref{eqn_EK_origin} and \eqref{eqn_Loyd2022_origin} satisfy properties \eqref{eqn_k_invariant_cond_1} and \eqref{eqn_k_invariant_cond_2}. So we can apply \eqref{eqn_proofs_keyidentity}  for them.

\subsection{Proof of \texorpdfstring{\eqref{eqn_mainthm_ergogic}}{}}  

For the totally uniquely ergodic system $(X,\mu, T)$, any $f\in C(X)$ and any $x\in X$,  we take $a(n)=f(T^{\Omega(n)}x)$.  Then $a(n^k)=f\big((T^k)^{\Omega(n)}x\big)$. Since $(X,\mu, T)$ is totally uniquely ergodic, by definition $(X,\mu, T^k)$ is uniquely ergodic. Thus, by Bergelson-Richter's theorem \eqref{eqn_BR2022thmA}, we have
\begin{equation}\label{eqn_mainthm_ergogic_pf1}
	\lim_{N\to\infty}\BEu{n\in  [N] } a(n^k)=\int_X f \, d\mu.
\end{equation}
Moreover, for any $m\in \N$, we have $a(n^km)=f\big((T^k)^{\Omega(n)} \cdot T^{\Omega(m)}x\big)$. We may take $ T^{\Omega(m)}x$ as an initial point in $X$. It follows by \eqref{eqn_mainthm_ergogic_pf1} that
\begin{equation*}
	\lim_{N\to\infty}\BEu{n\in  [N] } a(n^km)=\int_X f \, d\mu.
\end{equation*}
Hence $a(n)$ is of $k$-th power invariant average under multiplications. By \eqref{eqn_proofs_keyidentity} and \eqref{eqn_mainthm_ergogic_pf1} we get that 
\begin{equation*}
	\lim_{N\to\infty} \BEu{\substack{ n \in [N]\\ n \,\text{is} \, k\text{-full}}} a(n)=\int_X f \, d\mu,
\end{equation*}
which is \eqref{eqn_mainthm_ergogic}. 

\subsection{Proof of \texorpdfstring{\eqref{eqn_mainthm_EK}}{}}  

For any $F\in C_c(\R)$ and $N\ge1$, take 
$$a(n)=F\Big( \frac{\Omega(n) - k\log \log N }{k\sqrt{\log \log N}}\Big).$$
Then
$$a(n^k)=F\Big( \frac{\Omega(n) - \log \log N }{\sqrt{\log \log N}}\Big).$$
By Erd\H{o}s-Kac theorem \eqref{eqn_EK_origin}, we have
\begin{equation}\label{eqn_mainthm_EK_pf1}
	\lim_{N\to\infty}\BEu{n\in  [N] } a(n^k)=\lim_{N\to\infty}\BEu{n\in  [N] } F\Big( \frac{\Omega(n) - \log \log N }{\sqrt{\log \log N}}\Big)=\frac{1}{\sqrt{2\pi}} \int_{-\infty}^{\infty} F(t) e^{-t^2/2} \, dt.
\end{equation}

Moreover, for any $m\in \N$, we have
$$a(n^km)=F\Big( \frac{\Omega(n) +\Omega(m)/k - \log \log N }{\sqrt{\log \log N}}\Big).$$
Since $F$ is a continuous function of compact support, it is uniformly continuous. It follows that
\begin{equation}\label{eqn_mainthm_EK_pf_diff}
	F\Big( \frac{\Omega(n) +\Omega(m)/k - \log \log N }{\sqrt{\log \log N}}\Big)= F\Big( \frac{\Omega(n) - \log \log N }{\sqrt{\log \log N}}\Big) + o_{N\to\infty}(1),
\end{equation}
i.e., $a(n^km)=a(n^k)+ o_{N\to\infty}(1)$. This implies that
\begin{equation*}
	\lim_{N\to\infty}\BEu{n\in  [N] } a(n^km)= \lim_{N\to\infty}\BEu{n\in  [N] } a(n^k).
\end{equation*}
Thus, $a(n)$ is of $k$-th power invariant average under multiplications. By \eqref{eqn_proofs_keyidentity} and \eqref{eqn_mainthm_EK_pf1} we get that
	\begin{equation*}
	\lim_{N \to \infty}  \BEu{\substack{ n \in [N]\\ n \,\text{is} \, k\text{-full}}} a(n)=\frac{1}{\sqrt{2\pi}} \int_{-\infty}^{\infty} F(t) e^{-t^2/2} \, dt,
\end{equation*}
which is \eqref{eqn_mainthm_EK}. 

\subsection{Proof of \texorpdfstring{\eqref{eqn_mainthm_Loyd}}{}}  

Let $N\ge1$. For the totally uniquely ergodic system $(X,\mu, T)$, any $f\in C(X)$, any $x\in X$ and  any $F\in C_c(\R)$, we take 
$$a(n)=F \Big( \frac{\Omega(n) - k\log \log N }{k\sqrt{\log \log N}} \Big) f(T^{\Omega(n) }x).$$
Then
$$a(n^k)=F \Big( \frac{\Omega(n) - \log \log N }{\sqrt{\log \log N}} \Big) f\big((T^k)^{\Omega(n)}x\big).$$
Since $(X,\mu, T^k)$ is uniquely ergodic, by Loyd's theorem \eqref{eqn_Loyd2022_origin} we have
\begin{equation}\label{eqn_mainthm_Loyd_pf1}
	\lim_{N\to\infty}\BEu{n\in  [N] } a(n^k)=\Big(\frac{1}{\sqrt{2\pi}} \int_{-\infty}^{\infty} F(t) e^{-t^2/2} \, dt\Big)\Big( \int_X f \, d\mu \Big).
\end{equation}

Moreover, for any $m\in\N$, 
$$a(n^km)=F\Big( \frac{\Omega(n) +\Omega(m)/k - \log \log N }{\sqrt{\log \log N}}\Big) f\big((T^k)^{\Omega(n)} \cdot T^{\Omega(m)}x\big).$$
By \eqref{eqn_mainthm_EK_pf_diff}, we get that
\begin{equation*}
	a(n^km)= F\Big( \frac{\Omega(n) - \log \log N }{\sqrt{\log \log N}}\Big) f\big((T^k)^{\Omega(n)} \cdot T^{\Omega(m)}x\big) + o_{N\to\infty}(1).
\end{equation*}
It follows by Loyd's theorem again that
\begin{align}
	\lim_{N\to\infty}\BEu{n\in  [N] } a(n^km) & = \lim_{N\to\infty}\BEu{n\in  [N] } F\Big( \frac{\Omega(n) - \log \log N }{\sqrt{\log \log N}}\Big) f\big((T^k)^{\Omega(n)} \cdot T^{\Omega(m)}x\big)\nonumber\\
	&=\Big(\frac{1}{\sqrt{2\pi}} \int_{-\infty}^{\infty} F(t) e^{-t^2/2} \, dt\Big)\Big( \int_X f \, d\mu \Big),\nonumber
\end{align}
which is equal to $\lim_{N\to\infty}\BEu{n\in  [N] } a(n^k)$. Hence $a(n)$ is of $k$-th power invariant average under multiplications. By \eqref{eqn_proofs_keyidentity} and \eqref{eqn_mainthm_Loyd_pf1} we get that 
\begin{equation*}
	\lim_{N\to\infty} \BEu{\substack{ n \in [N]\\ n \,\text{is} \, k\text{-full}}} a(n)=\Big(\frac{1}{\sqrt{2\pi}} \int_{-\infty}^{\infty} F(t) e^{-t^2/2} \, dt\Big)\Big( \int_X f \, d\mu \Big),
\end{equation*}
which is \eqref{eqn_mainthm_Loyd}. This completes the proof of Theorem~\ref{mainthm_kfull_applications}.

\section{Proof of Theorem~\ref{thm_Richter_sqfree}}\label{sec_Richter}

In this section, similar to the proof of Theorems~\ref{mainthm_sqfree} and \ref{mainthm_kfull}, we will apply Propositions~\ref{prop_sqfree} and \ref{mainprop} to prove \eqref{eqn_Richter_sqfree} and \eqref{eqn_Richter_kfull}, respectively. We will cite Richter's theorem in the following proofs: for any bounded $a:\N\to\C$ one has
\begin{equation}\label{eqn_Richter_copy}
	\frac1N\sum_{n\leq N}a(\Omega(n))= \frac1N\sum_{n\leq N}a(\Omega(n)+1)+o_{N\to\infty}(1).
\end{equation}

\subsection{Proof of \texorpdfstring{\eqref{eqn_Richter_sqfree}}{}}

By Proposition~\ref{prop_sqfree}, for any $1\leq D \leq \sqrt{N}$ we have 
\begin{align}
		\frac{1}{N}\sum_{n=1}^N \mu^2(n) a(\Omega(n)) &= \sum_{d=1}^D \frac{\mu(d)}{d^2}\BEu{n\in [\frac{N}{d^2}]}a(\Omega(d^2n))+O\Big(\frac{1}{D}\Big)+O\Big(\frac{1}{\sqrt{N}}\Big), \label{eqn_Richter_sqfree1}\\
		\frac{1}{N}\sum_{n=1}^N \mu^2(n) a(\Omega(n)+1) &= \sum_{d=1}^D \frac{\mu(d)}{d^2}\BEu{n\in [\frac{N}{d^2}]}a(\Omega(d^2n)+1)+O\Big(\frac{1}{D}\Big)+O\Big(\frac{1}{\sqrt{N}}\Big). \label{eqn_Richter_sqfree2}
\end{align}
The difference between \eqref{eqn_Richter_sqfree1} and \eqref{eqn_Richter_sqfree2} is equal to
\begin{align}
	&\quad\frac{1}{N}\sum_{n=1}^N \mu^2(n) a(\Omega(n)) - \frac{1}{N}\sum_{n=1}^N \mu^2(n) a(\Omega(n)+1) \nonumber\\
	&=\sum_{d=1}^D\frac{\mu(d)}{d^2}  \Big(\BEu{n\in [\frac{N}{d^2}]}a(\Omega(n)+\Omega(d^2))- \BEu{n\in [\frac{N}{d^2}]}a(\Omega(n)+\Omega(d^2)+1) \Big)+O\Big(\frac{1}{D}\Big)+O\Big(\frac{1}{\sqrt{N}}\Big).\nonumber
\end{align}
Fix $D$ and take $N\to\infty$ first. By \eqref{eqn_Richter_copy}, we have
\begin{equation*}
	\BEu{n\in [\frac{N}{d^2}]}a(\Omega(n)+\Omega(d^2))- \BEu{n\in [\frac{N}{d^2}]}a(\Omega(n)+\Omega(d^2)+1)= o_{N\to\infty}(1).
\end{equation*}
It follows that
\begin{equation}\label{eqn_Richter_sqfree_pf3}
	\frac{1}{N}\sum_{n=1}^N \mu^2(n) a(\Omega(n)) - \frac{1}{N}\sum_{n=1}^N \mu^2(n) a(\Omega(n)+1)=O\Big(\frac{1}{D}\Big)+ o_{N\to\infty}(1).
\end{equation}
Then \eqref{eqn_Richter_sqfree} follows by taking $D\to\infty$ in \eqref{eqn_Richter_sqfree_pf3}.

\subsection{Proof of \texorpdfstring{\eqref{eqn_Richter_kfull}}{}}

Let $k\ge2$. By Proposition~\ref{mainprop}, for any $1\leq D_1\leq N^{\frac1{(k-1)(k+1)}}, \dots, 1\leq D_{k-1}\leq N^{\frac1{(k-1)(2k-1)}}$, we have
\begin{multline}\label{eqn_Richter_kfull1}
	\frac1{N^{\frac1k}}\sum_{\substack{1\leq n \leq N\\ n \,\text{is} \, k\text{-full}}} a(\Omega(n)) = \sum_{\substack{n_1\leq D_1,\dots, n_{k-1}\leq D_{k-1}\\ (n_i, n_j)=1, \,\forall i<j}}  \frac{\mu^2(n_1)\cdots\mu^2(n_{k-1}) }{n_1^{1+\frac1k}\cdots n_{k-1}^{1+\frac{k-1}k}} \\ \cdot \BEu{m\in [V_k] } a(\Omega(m^k n_1^{k+1}\cdots n_{k-1}^{2k-1})) + O\of{D_1^{-\frac1{k}}}+\cdots + O\of{ D_{k-1}^{-\frac{k-1}k}} + O\big(N^{-\frac1{k(k+1)}}\big), 
\end{multline}
and 
\begin{multline}\label{eqn_Richter_kfull2}
	\frac1{N^{\frac1k}}\sum_{\substack{1\leq n \leq N\\ n \,\text{is} \, k\text{-full}}} a(\Omega(n)+k) = \sum_{\substack{n_1\leq D_1,\dots, n_{k-1}\leq D_{k-1}\\ (n_i, n_j)=1, \,\forall i<j}}  \frac{\mu^2(n_1)\cdots\mu^2(n_{k-1}) }{n_1^{1+\frac1k}\cdots n_{k-1}^{1+\frac{k-1}k}} \\ \cdot \BEu{m\in [V_k] } a(\Omega(m^k n_1^{k+1}\cdots n_{k-1}^{2k-1})+k) + O\of{D_1^{-\frac1{k}}}+\cdots + O\of{ D_{k-1}^{-\frac{k-1}k}} + O\big(N^{-\frac1{k(k+1)}}\big).
\end{multline}

The difference between \eqref{eqn_Richter_kfull1} and \eqref{eqn_Richter_kfull2} is equal to
\begin{multline*}
	\frac1{N^{\frac1k}}\sum_{\substack{1\leq n \leq N\\ n \,\text{is} \, k\text{-full}}} a(\Omega(n)) - \frac1{N^{\frac1k}}\sum_{\substack{1\leq n \leq N\\ n \,\text{is} \, k\text{-full}}} a(\Omega(n)+k) 
	= \sum_{\substack{n_1\leq D_1,\dots, n_{k-1}\leq D_{k-1}\\ (n_i, n_j)=1, \,\forall i<j}}  \frac{\mu^2(n_1)\cdots\mu^2(n_{k-1}) }{n_1^{1+\frac1k}\cdots n_{k-1}^{1+\frac{k-1}k}} \\ 
	\cdot \left(\BEu{m\in [V_k] } a(\Omega(m^k n_1^{k+1}\cdots n_{k-1}^{2k-1})) - \BEu{m\in [V_k]} a(\Omega(m^k n_1^{k+1}\cdots n_{k-1}^{2k-1})+k) \right)\\
	+ O\of{D_1^{-\frac1{k}}}+\cdots + O\of{ D_{k-1}^{-\frac{k-1}k}} + O\big(N^{-\frac1{k(k+1)}}\big).
\end{multline*}

Fix $D_1,\dots, D_{k-1}$ and take $N\to\infty$ first. Let $b(m)=a\big(km+\Omega(n_1^{k+1}\cdots n_{k-1}^{2k-1})\big)$. Then by \eqref{eqn_Richter_copy}, we have
\begin{equation*}
	\BEu{m\in [V_k]} b(\Omega(m)) - \BEu{m\in [V_k] } b(\Omega(m)+1) =o_{N\to\infty}(1).
	\end{equation*}
This implies that
\begin{equation}\label{eqn_Richter_kfull_diff2}
	\frac1{N^{\frac1k}}\sum_{\substack{1\leq n \leq N\\ n \,\text{is} \, k\text{-full}}} a(\Omega(n)) - \frac1{N^{\frac1k}}\sum_{\substack{1\leq n \leq N\\ n \,\text{is} \, k\text{-full}}} a(\Omega(n)+k) 
	= O\of{D_1^{-\frac1{k}}}+\cdots + O\of{ D_{k-1}^{-\frac{k-1}k}} + o_{N\to\infty}(1).
\end{equation}
Then \eqref{eqn_Richter_kfull} follows immediately by taking $D_1 ,\dots, D_{k-1} \to\infty$ in \eqref{eqn_Richter_kfull_diff2}. This completes the proof of Theorem~\ref{thm_Richter_sqfree}.

\section*{Acknowledgments}
The authors are deeply grateful to the referee and the copy editor for a very careful review and a number of precise comments and helpful suggestions, which improve this paper a lot.  The authors would also like to thank Rongzhong Xiao for helpful discussion and suggesting reference \cite{CellarosiSinai2013} to us. This work is supported by the National Natural Science Foundation of China (Grant No. 12561001). Huixi Li’s research is supported by the National Natural Science Foundation of China (Grant No. 12201313). Shaoyun Yi is supported by the National Natural Science Foundation of China (Nos.~12301016, 12471187) and the Fundamental Research Funds for the Central Universities (No.~20720230025).


\end{document}